\newtheorem{theorem}{Theorem}[section]
\newtheorem{lemma}[theorem]{Lemma}
\newtheorem{proposition}[theorem]{Proposition}
\newtheorem{corollary}[theorem]{Corollary}
\newtheorem{conjecture}[theorem]{Conjecture}
\theoremstyle{definition}
\newtheorem{definition}[theorem]{Definition}
\newtheorem{example}[theorem]{Example}
\newtheorem{remark}[theorem]{Remark}
\numberwithin{equation}{section}
\newcommand{\CC}{\mathbb C}
\newcommand{\HH}{\mathbb H}
\newcommand{\NN}{\mathbb N}
\newcommand{\PP}{\mathbb P}
\newcommand{\QQ}{\mathbb Q}
\newcommand{\RR}{\mathbb R}
\newcommand{\ZZ}{\mathbb Z}
\newcommand{\cD}{\mathcal D}
\newcommand{\cH}{\mathcal H}
\newcommand{\SL}{\mathop{\mathrm {SL}}\nolimits}
\newcommand{\Sp}{\mathop{\mathrm {Sp}}\nolimits}
\newcommand{\Orth}{\mathop{\null\mathrm {O}}\nolimits}
\newcommand{\rank}{\mathop{\mathrm {rank}}\nolimits}
\newcommand{\latt}[1]{{\langle{#1}\rangle}}
\newcommand{\Kthree}{\mathop{\mathrm {K3}}\nolimits}
\newcommand{\Grit}{\operatorname{Grit}}
\newcommand{\Borch}{\operatorname{Borch}}
\def\div{\operatorname{Div}}
\newcommand{\m}{\operatorname{mod}}
\newcommand{\im}{\operatorname{Im}}
\newcommand{\Hum}{\operatorname{Hum}}
\newcommand{\mult}{\operatorname{mult}}
\newcommand{\norm}{\operatorname{Norm}}
\newcommand{\sing}{\operatorname{Sing}}
\begin{document}
\title[Theta block conjecture for paramodular forms of weight 2]
{Theta block conjecture for paramodular forms of weight 2}

\author{Valery Gritsenko}

\address{Laboratoire Paul Painlev\'{e}, Universit\'{e} de Lille, 59655 Villeneuve d'Ascq Cedex, France and National Research University Higher School of Economics, Russian Federation}

\email{Valery.Gritsenko@univ-lille.fr}

\author{Haowu Wang}

\address{Laboratoire Paul Painlev\'{e}, Universit\'{e} de Lille, 59655 Villeneuve d'Ascq Cedex, France}

\email{haowu.wangmath@gmail.com}

\subjclass[2010]{11F30, 11F46, 11F50, 11F55, 14K25}

\date{\today}

\keywords{Borcherds product, Gritsenko lift, Paramodular forms, Jacobi forms}

\begin{abstract}
In this paper we construct an infinite family of paramodular forms of 
weight $2$ which are simultaneously Borcherds products and additive Jacobi 
lifts. This proves an important part of the theta-block conjecture of 
Gritsenko--Poor--Yuen (2013) related to the most important infinite series of 
theta-blocks of weight $2$ and $q$-order $1$. We also consider some 
applications of this result.
\end{abstract}

\maketitle

\section{Introduction}
Paramodular forms are Siegel modular forms of degree two 
with respect to the symplectic group $\Gamma_t$ of elementary divisor 
$(1,t)$, the paramodular group. 
There are two ways to construct paramodular forms from Jacobi forms. The 
first one is the additive Jacobi lifting due to Gritsenko (see 
\cite{G94} and \cite{G94a}) which lifts a holomorphic Jacobi form to a 
paramodular form. 
The second method is the multiplicative lifting (Borcherds automorphic 
product, see \cite{Bo95}, \cite{Bo98}) in a form, proposed by 
Gritsenko--Nikulin in \cite{GN98}, 
which sends a weakly holomorphic Jacobi form of weight $0$ to a meromorphic 
paramodular  form. 
In \cite{GPY1}, V. Gritsenko, C. Poor and D. Yuen 
investigated the paramodular forms which are simultaneously Borcherds 
products and Gritsenko lifts. 

Let $f:\NN \to \ZZ$ be a function with a finite support, where $\NN$ is the set of nonnegative integers.
We define {\it theta block} (see \cite{GSZ}) 
\begin{equation}\label{thblock}
\Theta_f(\tau,z)=\eta^{f(0)}(\tau)
\prod_{a=1}^{\infty}(\vartheta_a(\tau,z)/\eta(\tau))^{f(a)}
\end{equation}
as a finite  product of the  Jacobi theta-series 
$\vartheta_a(\tau,z)=\vartheta(\tau,az)$ divided by the Dedekind 
$\eta$-function 
$\eta(\tau)=q^{\frac{1}{24}}\prod_{n\ge 1}(1-q^n)$, where 
$$
\vartheta(\tau,z)= q^{\frac{1}{8}}
(\zeta^{\frac{1}{2}}-\zeta^{-\frac{1}{2}}) \prod_{n\geq 1} (1-q^{n}\zeta)
(1-q^n \zeta^{-1})(1-q^n)
$$
is the odd Jacobi theta-series which is a holomorphic Jacobi form of weight $\frac{1}{2}$ and index $\frac{1}{2}$ with a multiplier system of order $8$ (see \cite{GN98}). We call $\Theta_f$ a pure theta block if $f$ is nonnegative on $\NN$.
In general, the pure theta block $\Theta_f$ is a weak Jacobi form of weight
$f(0)/2$ and index $N=\frac{1}2\sum a^2f(a)$ with a character. 
It is important that 
for some functions $f$ the theta block is holomorphic at infinity and is a holomorphic Jacobi form (see the general theory of theta blocks in \cite{GSZ}).  
In this way, one gets the most significant holomorphic Jacobi forms of small 
weights. The following conjecture was proposed in \cite{GPY1}.
\smallskip

\noindent
{\bf Theta block conjecture.} 
{\it Let the pure theta block $\Theta_f$ be a holomorphic Jacobi form of weight 
$k$ and index $N$ with vanishing order one in $q=e^{2\pi i\tau}$. 
We define a weak Jacobi form $\Psi_f=-(\Theta_f\lvert T_{-}(2))/\Theta_f$ of weight $0$ and index $N$, where $T_{-}(2)$ is the index 
raising Hecke operator. Then
$$
\Grit(\Theta_f)=\Borch(\Psi_f)
$$
is a holomorphic symmetric paramodular form of weight $k$ with respect to $\Gamma_N^+$.}
\smallskip

This conjecture gives a characterization of paramodular forms which are 
simultaneously Borcherds products and Gritsenko lifts. The conjecture was 
proved in \cite[\S 8]{GPY1} for all known series of theta 
blocks of weights $3\leq k \leq 11$. Another known infinite series 
is given by the theta blocks of weight 2 of type 
$\frac{10-\vartheta}{6-\eta}$ found in \cite{GSZ}:
$$
\phi_{2,\mathbf{a}}=
\frac{\vartheta_{a_1}\vartheta_{a_2}\vartheta_{a_3}\vartheta_{a_4}
\vartheta_{a_1+a_2}\vartheta_{a_2+a_3}\vartheta_{a_3+a_4}
\vartheta_{a_1+a_2+a_3}\vartheta_{a_2+a_3+a_4}\vartheta_{a_1+a_2+a_3+a_4}}
{\eta^{6}}\in J_{2,N(\mathbf{a})}
$$
where $\mathbf{a}=(a_1, a_2, a_3, a_4)\in \ZZ^4$ and 
$$
N(\mathbf{a})=2a_1^2 + 3a_1a_2+2a_1a_3+a_1a_4+3a_2^2+4a_2a_3+2a_2a_4
+3a_3^2+3a_3a_4+2a_4^2.
$$
We remark that there are four infinite series of theta blocks of weight $2$ in \cite{GSZ}. The series $\phi_{2,\mathbf{a}}$ is the most important because it gives the first examples of holomorphic Jacobi forms of weight 2: the Jacobi-Eisenstein series of index $25$ and the Jacobi cusp form of index $37$ (see \S \ref{Subsec:4.2}).
\begin{theorem}\label{thmwt2}
For any  $\mathbf{a}\in \ZZ^4$ such that $\phi_{2,\mathbf{a}}$ is not 
identically zero, one has
$$\Grit(\phi_{2,\mathbf{a}})= 
\Borch\left(-\frac{\phi_{2,\mathbf{a}}|T_{-}(2)}{\phi_{2,\mathbf{a}}}
\right)\in M_2(\Gamma_{N(\mathbf{a})}^+).
$$
\end{theorem}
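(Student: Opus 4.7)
The plan is to show that $\Grit(\phi_{2,\mathbf{a}})$ and $\Borch(\Psi_{\mathbf{a}})$, where $\Psi_{\mathbf{a}}:=-\phi_{2,\mathbf{a}}|T_{-}(2)/\phi_{2,\mathbf{a}}$, are holomorphic paramodular forms of weight $2$ on $\Gamma_{N(\mathbf{a})}^+$ with the same divisor and the same first Fourier-Jacobi coefficient, so that they coincide by Koecher's principle applied to their quotient.

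First I would verify that $\Psi_{\mathbf{a}}$ is a genuine weak Jacobi form of weight $0$ and index $N(\mathbf{a})$ with integer Fourier coefficients, so that the Borcherds lift is actually defined on it. Because $\phi_{2,\mathbf{a}}$ is holomorphic of $q$-order $1$, the quotient has at worst a simple pole in $q$; combining the product formula $\phi_{2,\mathbf{a}}=\vartheta_{a_1}\vartheta_{a_2}\cdots\vartheta_{a_1+a_2+a_3+a_4}/\eta^6$ with the explicit action of $T_{-}(2)$ on theta quotients should yield an integral $q,\zeta$-expansion, and in particular the constant coefficient $c(0,0)$ will equal $4$, which is twice the target weight of the Borcherds lift.

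The core of the proof is then a divisor computation. On the one hand, the divisor of $\Grit(\phi_{2,\mathbf{a}})$ on the Siegel threefold is controlled by the ten $\vartheta$-factors of the theta block, each contributing a Humbert surface with multiplicity one. On the other hand, the divisor of $\Borch(\Psi_{\mathbf{a}})$ is prescribed by the principal part $\{c(n,r):4nN(\mathbf{a})-r^2<0\}$ of $\Psi_{\mathbf{a}}$. I would compute this principal part explicitly and check that the two divisors agree term by term; this simultaneously establishes that $\Borch(\Psi_{\mathbf{a}})$ is holomorphic of weight $2$ and that the ratio $\Grit(\phi_{2,\mathbf{a}})/\Borch(\Psi_{\mathbf{a}})$ is a paramodular function of weight $0$, hence constant. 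The constant is $1$: the first Fourier-Jacobi coefficient of $\Grit(\phi_{2,\mathbf{a}})$ equals $\phi_{2,\mathbf{a}}$ by construction, and the standard Gritsenko-Nikulin formula for the leading term of a Borcherds product yields the same $\phi_{2,\mathbf{a}}$ on the Borcherds side. The main obstacle is this divisor-matching step, carried out uniformly over the four-parameter family $\mathbf{a}\in\ZZ^4$; I expect it to reduce to an identity relating the singular Fourier coefficients of $\phi_{2,\mathbf{a}}|T_{-}(2)/\phi_{2,\mathbf{a}}$ to logarithmic derivatives of the individual $\vartheta_a$, which can then be checked one primitive theta factor at a time.
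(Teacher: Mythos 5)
Your overall strategy---match the divisors of $\Grit(\phi_{2,\mathbf{a}})$ and $\Borch(\Psi_{\mathbf{a}})$ and apply K\"ocher's principle to the quotient---is the natural first attempt, but the central step as you describe it fails, and the failure is exactly the difficulty the paper is designed to circumvent. You assert that the divisor of $\Borch(\Psi_{\mathbf{a}})$, read off from the principal part of $\Psi_{\mathbf{a}}$, can be checked to ``agree term by term'' with the Humbert surfaces coming from the ten theta factors of $\phi_{2,\mathbf{a}}$. This is false for all but the smallest index. The theta factors only account for the singular coefficients $c(0,r)$ in the $q^0$-term of $\Psi_{\mathbf{a}}$; in general the principal part also contains singular coefficients $c(n,r)$ with $n>0$ and $4nN(\mathbf{a})-r^2<0$, and each of these forces $\Borch(\Psi_{\mathbf{a}})$ to vanish on an \emph{additional} Humbert surface that is not a zero divisor of any $\vartheta_{a_i}$-factor. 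Concretely, already for $\mathbf{a}=(1,1,1,2)$ (index $37$) the singular part contains the term $q^6\zeta^{30}$, producing the component $\Hum\bigl(\begin{smallmatrix}6&15\\15&37\end{smallmatrix}\bigr)$; similar extra terms occur for indices $43$, $50$, $53$, and generically throughout the family. To close your argument you would have to prove that $\Grit(\phi_{2,\mathbf{a}})$ also vanishes on every such additional surface, which amounts to nontrivial linear relations among the Fourier coefficients of $\phi_{2,\mathbf{a}}$ (of the type $\sum_{a}c(6a^2+na,30a+r)=0$ for all $n,r$). Nothing in your proposal --- in particular not the suggested reduction to logarithmic derivatives of individual $\vartheta_a$ --- produces these relations, and they are not accessible one theta factor at a time.

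The missing idea is to leave the one-variable setting entirely. One realizes $\phi_{2,\mathbf{a}}(\tau,z)=\Theta_{A_4}(\tau,z\mathbf{v})$ as a specialization of the Kac--Weyl denominator function of $A_4$, a Jacobi form for the lattice $A_4^\vee(5)$. That lattice satisfies the $\norm_2$ condition, which guarantees that \emph{all} singular Fourier coefficients of $\Psi_{A_4}=-(\Theta_{A_4}|T_-(2))/\Theta_{A_4}$ lie in its $q^0$-term, so in the five-variable setting no additional divisors occur and the divisor comparison plus K\"ocher's principle does go through, yielding $\Grit(\Theta_{A_4})=\Borch(\Psi_{A_4})$. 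Since both lifts commute with the substitution $\mathfrak{z}=z\mathbf{v}$ (the Hecke operators $T_-(m)$ act linearly on $\mathfrak{z}$), the paramodular identity follows by pull-back, and the extra relations among Fourier coefficients come for free as a consequence rather than as an input. Without this enlargement of the symmetry group, your divisor computation cannot be completed.
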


This theorem was announced in \cite{GW} with the  main idea of the proof. 
In this paper we give its complete proof and present some applications.

\section{Modular forms and lift constructions}

Let $N$ be a positive integer. The \textit{paramodular group} of level
(or polarization)  $N$ is a subgroup of $\Sp_2(\QQ)$ defined by
\begin{equation}
\Gamma_N=\left(\begin{array}{cccc}
  * & N* & * & * \\ 
  * & * & * & */N \\ 
  * & N* & * & * \\ 
  N* & N* & N* & *
  \end{array}   \right) \cap \Sp_2(\QQ),\quad \text{\rm all}\ *\in \ZZ.
\end{equation}
For $N>1$, we shall use the following double normal extension
\begin{equation}
\Gamma_N^+=\Gamma_N\cup \Gamma_NV_N,\quad V_N=\frac{1}{\sqrt{N}}\left(\begin{array}{cccc}
0 & N & 0 & 0 \\ 
-1 & 0 & 0 & 0 \\ 
0 & 0 & 0 & 1 \\ 
0 & 0 & -N & 0
\end{array}  \right).
\end{equation}
This group acts on the Siegel upper half plane of genus $2$ 
$$\HH_2=\left\lbrace Z=\left( \begin{array}{cc}
  \tau & z \\ 
  z & \omega
  \end{array}  \right)\in M(2,\CC): \im Z>0    \right\rbrace $$
in the usual way
$ M\latt{Z}=(AZ+B)(CZ+D)^{-1}$, $M=\left(\begin{array}{cc}
A & B \\ 
C & D
\end{array}  \right) \in \Sp_2(\RR)$. 
We put 
$(F\lvert_k M )(Z) =\det(CZ+D)^{-k}F(M\latt{Z})$
for any integer $k$.

\begin{definition}\label{JF1}
A holomorphic function $F: \HH_2 \longrightarrow \CC$ is called a
\textit{Siegel paramodular form} of  weight $k$ and level $N$ with 
character $\chi$ if $F\lvert_k M=\chi(M)F$ for any $M\in \Gamma_N$.
We denote the space of such modular forms by $M_k(\Gamma_N,\chi)$. A 
paramodular form $F$ is called a \textit{cusp} form if 
$\Phi(F\lvert_k g)=0$ for all $g\in \Sp_2(\QQ)$, 
where $\Phi$ is Siegel's operator.
The space of paramodular cusp forms is denoted by $S_k(\Gamma_N,\chi)$.
\end{definition}

Let $\chi_N: \Gamma_N^+\to \{\pm 1\}$ be the nontrivial binary character 
such  that $\chi_N(V_N)=-1$ and $\chi_N|_{\Gamma_N}=1$. Then  
$M_k(\Gamma_N)$ is decomposed into the direct 
sum of plus and minus $V_N$-eigenspaces.
For $F\in M_{k}(\Gamma_N^+, \chi_N^\epsilon)$ ($\epsilon=0$ or $1$),
we consider its Fourier and Fourier--Jacobi 
expansions
\begin{equation}\label{FJ}
F(Z)
=\sum_{m\geq 0} \sum_{\substack{n\in \NN, r\in \ZZ\\ 4nmN-r^2\geq 0}}
c(n,r,m)q^n\zeta^r \xi^{mN}
=\sum_{m\geq 0}\phi_{mN}(\tau,z)\xi^{mN},
\end{equation}
where 
$q=\exp(2\pi i \tau)$, $\zeta=\exp(2\pi i z)$, $\xi=\exp(2\pi i \omega)$.
Then we have the equality $(-1)^{k+\epsilon}F(\tau,z,\omega)
=F(N\omega,z,\tau/N)$, which yields
$c(n,r,m)=(-1)^{k+\epsilon}c(m,r,n)$.
When $k+\epsilon$ is odd (even), $F$ is called \textit{antisymmetric}
(\textit{symmetric}). We remark that $F$ is a cusp form if and only if 
$c(n,r,m)=0$ unless $4nmN-r^2>0$.

The Fourier--Jacobi coefficient $\phi_{mN}\in J_{k,mN}$
is a holomorphic Jacobi form (see \cite{EZ} and the more general 
Definition \ref{JF2} below). 

We introduce the additive lifting which is completely determined by 
its first Fourier--Jacobi coefficient.
Let $\phi(\tau,z)\in J_{k,N}$. 
The index 
raising Hecke operator $T_{-}(m)$ is defined as follows
$$ 
\phi\lvert_k T_{-}(m)=m^{-1}\sum_{\substack{ ad=m\\ b\m d }}a^{k}\phi
\left(\frac{a\tau+b}{d},az\right) \in J_{k,mN}.
$$
The Fourier coefficients of $\phi\lvert_k T_{-}(m)$ are given by
$$ 
c(n,r;\phi\lvert_k T_{-}(m))
=\sum_{\substack{d\in \NN\\ d\vert (n,r,m)}}
d^{k-1}c\left(\frac{nm}{d^2},\frac{r}{d};\phi \right).
$$
\begin{theorem}[\cite{G94}]
For $\phi\in J_{k,N}$, we have
$$ 
\Grit(\phi)(Z)=c(0,0;\phi)G_k(\tau)+ \sum_{m\geq 1} 
\left( \phi\lvert_k T_{-}(m) \right) (\tau,z)e^{2\pi i mN\omega}
\in M_k(\Gamma_N^+,\chi_N^k)
$$
where
$G_k(\tau)=(2\pi i)^{-k}(k-1)!\zeta(k)+\sum_{n\geq 1}\sigma_{k-1}(n)q^n$ is 
the Eisenstein series of weight $k$ on $\SL_2(\ZZ)$.
Moreover, if $\phi$ is a Jacobi cusp form then $\Grit(\phi)$ is a 
paramodular cusp form.
\end{theorem}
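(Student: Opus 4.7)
The plan is to show the two sides agree by realizing the right-hand Borcherds product as a holomorphic weight-$2$ paramodular form and then matching it to the Gritsenko lift via a divisor-and-leading-coefficient comparison. First observe that $\phi_{2,\mathbf{a}}$ is a Jacobi cusp form: each of the ten factors $\vartheta_{\ell}$ contributes $q^{1/8}$ to the leading $q$-order while $\eta^{-6}$ contributes $q^{-1/4}$, giving total $q$-order $10\cdot\tfrac{1}{8}-\tfrac{1}{4}=1$. Hence $c(0,0;\phi_{2,\mathbf{a}})=0$, and by the Gritsenko lifting theorem $\Grit(\phi_{2,\mathbf{a}})\in S_2(\Gamma_{N(\mathbf{a})}^+)$, with no quasi-modular issue from $G_2$.

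Next, one analyzes $\Psi_{\mathbf{a}}:=-\phi_{2,\mathbf{a}}|T_{-}(2)/\phi_{2,\mathbf{a}}$, a weak Jacobi form of weight $0$ and index $N(\mathbf{a})$, and computes its singular Fourier coefficients $c(n,r;\Psi_{\mathbf{a}})$ with $4N(\mathbf{a})\,n-r^2\le 0$ from the explicit product expression for $\phi_{2,\mathbf{a}}$ and the definition of $T_{-}(2)$. Two things must be verified: $c(0,0;\Psi_{\mathbf{a}})$ is the value forcing the Borcherds output to have weight $2$; and the remaining singular coefficients are non-negative integers, so that the Borcherds product is holomorphic on all of $\HH_2$. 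Both should emerge from the special $\frac{10-\vartheta}{6-\eta}$ structure of $\phi_{2,\mathbf{a}}$, whose ten arguments coincide with the positive roots of the root system $A_4$.

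Granting these, the Gritsenko--Nikulin construction of \cite{GN98} yields $\Borch(\Psi_{\mathbf{a}})\in M_2(\Gamma_{N(\mathbf{a})}^+)$, and the product formula shows its first Fourier--Jacobi coefficient is $\phi_{2,\mathbf{a}}$, agreeing with that of $\Grit(\phi_{2,\mathbf{a}})$. To conclude the identity, one identifies the divisor of $\Borch(\Psi_{\mathbf{a}})$ (a sum of Humbert surfaces read off from the $c(n,r;\Psi_{\mathbf{a}})$) with that of $\Grit(\phi_{2,\mathbf{a}})$, the latter obtained by tracking the zeros supplied by the theta factors in each Hecke translate $\phi_{2,\mathbf{a}}|T_{-}(m)$ appearing as a Fourier--Jacobi coefficient of $\Grit(\phi_{2,\mathbf{a}})$. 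The quotient of the two sides is then a holomorphic paramodular function without zeros or poles, hence constant, and equality of first Fourier--Jacobi coefficients pins the constant to $1$.

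The principal obstacle is the singular-coefficient calculation together with its integrality and non-negativity: Fourier coefficients of a quotient of Jacobi forms are not manifestly integral or non-negative, and establishing this is the heart of the theta block conjecture. One must exploit the specific ten-$\vartheta$ product structure to rewrite the singular terms as manifestly non-negative combinations. A companion subtlety is the divisor identification for $\Grit(\phi_{2,\mathbf{a}})$: generic Gritsenko lifts do not have Humbert divisors, and showing that this particular one does is essentially the content of the theorem.
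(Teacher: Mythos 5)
Your proposal does not address the statement you were asked to prove. The statement is the additive (Gritsenko) lifting theorem itself: that for a holomorphic Jacobi form $\phi\in J_{k,N}$ the series $c(0,0;\phi)G_k(\tau)+\sum_{m\ge 1}(\phi\lvert_k T_{-}(m))(\tau,z)e^{2\pi i mN\omega}$ converges to a paramodular form of weight $k$ for the extended group $\Gamma_N^+$ with character $\chi_N^k$, and that cuspidality of $\phi$ implies cuspidality of the lift. What you have written instead is an outline of the proof of the paper's main result (Theorem \ref{thmwt2}), namely the identity $\Grit(\phi_{2,\mathbf{a}})=\Borch(-\phi_{2,\mathbf{a}}\lvert T_{-}(2)/\phi_{2,\mathbf{a}})$ for the ten-theta weight-$2$ series. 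Worse, your argument explicitly invokes the very theorem you were supposed to establish (``by the Gritsenko lifting theorem $\Grit(\phi_{2,\mathbf{a}})\in S_2(\Gamma_{N(\mathbf{a})}^+)$''), so as a proof of the assigned statement it is circular; and the paper itself does not prove this statement but cites it from \cite{G94}, so there is no divisor or Borcherds-product argument to be made here at all.

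A proof of the actual statement runs along entirely different lines. One checks that each Hecke translate $\phi\lvert_k T_{-}(m)$ lies in $J_{k,mN}$, so the series is a formal Fourier--Jacobi expansion invariant under the Jacobi (parabolic) subgroup of $\Gamma_N$; one verifies convergence; one then observes from the explicit formula $c(n,r;\phi\lvert_k T_{-}(m))=\sum_{d\mid(n,r,m)}d^{k-1}c(nm/d^2,r/d;\phi)$ that the resulting Fourier coefficients $c(n,r,m)$ are symmetric in $n$ and $m$, which gives invariance (up to the sign $\chi_N^k$) under the Fricke-type involution $V_N$; since $\Gamma_N^+$ is generated by the Jacobi group together with $V_N$, modularity follows. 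The cusp-form claim is then immediate because $c(n,r,m)$ vanishes unless $4nmN-r^2>0$ when $\phi$ is a Jacobi cusp form. None of these steps appears in your proposal.
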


The paramodular form $\Grit(\phi)$ is always symmetric. 
We now describe the second  lifting, the Borcherds automorphic 
product (see \cite{Bo95} and \cite{Bo98}) in a form proposed by 
Gritsenko and Nikulin (see \cite{GN98}).
The Borcherds automorphic product is determined by the first two 
Fourier--Jacobi coefficients. 

\begin{theorem}[\cite{GN98}]\label{bpthm}
Let $N$ be a positive integer. Assume that $\Psi\in J_{0,N}^{!}$ is a 
weakly holomorphic Jacobi form of weight $0$ and index $N$
with Fourier expansion
$$\Psi(\tau,z)=\sum_{n,r\in\ZZ}c(n,r)q^n\zeta^r$$
and $c(n,r)\in \ZZ$ for $4Nn-r^2\leq 0$. We set
$C=\frac{1}{4}\sum_{r\in \ZZ}r^2 c(0,r)$.
Then the function 
\begin{equation}\label{bprod1} 
\Borch(\Psi)=\biggl(\eta^{c(0,0)}\prod_{r >0}\left(\frac{\vartheta_{r}}
{\eta} \right)^{c(0,r)}\xi^C\biggr) \cdot \exp\left(-\Grit(\Psi) \right)
\end{equation}
is a meromorphic paramodular form 
$\Borch(\Psi)\in M_k^{\rm{mero}}(\Gamma_N^+,\chi_\Psi)$ of weight $k=c(0,0)/2$
whose divisor in 
$\Gamma_N^+\backslash \HH_2$ consists of 
\textit{Humbert modular surfaces} 
\begin{equation*}
\Hum(T_0)
=\Gamma_N^+\latt{\{Z\in \HH_2: n_0\tau+r_0z+Nm_0\omega=0 \}}, 
\quad T_0=
\left(\begin{array}{cc}
n_0 & r_0/2 \\ 
r_0/2 & Nm_0
\end{array}  \right)
\end{equation*}
with $\gcd(n_0,r_0,m_0)=1$, $m_0\geq 0$ and $\det(T_0)<0$. 
The multiplicity of $\Borch(\Psi)$ on $\Hum(T_0)$ is $\sum_{n\geq 1}
c(n^2n_0m_0,nr_0)$. 
For $\lambda\gg 0$, on $\{Z\in \HH_2: \im Z > \lambda I_2 \}$ 
the following product expansion is valid 
$$
\Borch(\Psi)(Z)=q^A\zeta^B \xi^C\prod_{\substack{n,r,m\in\ZZ: m\geq 0 \\ 
\textit{if $m=0$ then $n\geq 0$}\\ \textit{if $m=n=0$ then $r<0$}  }}
(1-q^n\zeta^r\xi^{Nm})^{c(nm,r)}
$$ where 
\begin{align*}
24A=\sum_{r\in \ZZ}c(0,r),\quad
2B=\sum_{r\in \NN}r c(0,r),\quad D_0=\sum_{n<0}\sigma_0(-n) c(n,0).
\end{align*}
The character $\chi_\Psi$ of the paramodular form is generated by 
the character (or the multiplier system) of the theta block in 
$\Borch(\Psi)$ and the character $\chi_N^{k+D_0}$.
\end{theorem}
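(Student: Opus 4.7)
The plan is as follows. Write $\phi := \phi_{2,\mathbf{a}}$ and $N := N(\mathbf{a})$, and set
\[
\Psi \;:=\; -\,(\phi\lvert_2 T_{-}(2))/\phi,
\]
viewed \emph{a priori} as a meromorphic weight $0$, index $N$ Jacobi quotient. I will proceed in three stages: (i) show $\Psi \in J_{0,N}^{!}$; (ii) apply Theorem~\ref{bpthm} to $\Psi$ and match the first two Fourier--Jacobi coefficients of $\Borch(\Psi)$ with those of $\Grit(\phi)$; (iii) conclude $\Grit(\phi) = \Borch(\Psi)$.

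For stage (i), the weight and index transformation of $\Psi$ is immediate from $\phi \in J_{2,N}$ and $\phi\lvert_2 T_{-}(2) \in J_{2,2N}$, and integrality of the Fourier coefficients on $4Nn - r^2 \leq 0$ is inherited from $\phi$ and $\phi\lvert_2 T_{-}(2)$. The genuine content is holomorphy on $\HH\times\CC$: the ten odd theta factors in $\phi = \eta^{-6}\prod_i \vartheta_{r_i}$ give rise to ten simple-zero divisors $\{\vartheta_{r_i}(\tau,z)=0\}$, and one must verify that
\[
\phi\lvert_2 T_{-}(2) \;=\; 2\phi(2\tau,2z) \;+\; \tfrac{1}{2}\phi(\tau/2,z) \;+\; \tfrac{1}{2}\phi((\tau+1)/2,z)
\]
also vanishes on each of them. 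This is a finite case analysis using the $\SL_2(\ZZ)$-transformation rules of the odd Jacobi theta series applied to the ten indices $r_i$ built from $\mathbf{a}$.

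For stage (ii), a direct Laurent expansion of $-(\phi\lvert_2 T_{-}(2))/\phi$ in $\zeta$ identifies the $q^0$-Fourier coefficients of $\Psi$ as $c(0,r;\Psi) = f(r)$, where $f$ is the exponent function of the theta block $\phi$, i.e.\ $f(0) = 4$ and $f(r) = 1$ precisely for the ten indices $r_i$ attached to $\mathbf{a}$. Hence the theta block factor $\eta^{c(0,0)}\prod_{r>0}(\vartheta_r/\eta)^{c(0,r)}$ in \eqref{bprod1} coincides with $\phi$, and $c(0,0)/2 = 2$ is the correct weight. By Theorem~\ref{bpthm}, $\Borch(\Psi)$ is a meromorphic weight-$2$ paramodular form with first Fourier--Jacobi coefficient $\phi\,\xi^N$. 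Using that the expansion of $\Grit(\Psi)$ begins $\Psi\,\xi^N + O(\xi^{2N})$, expansion of the exponential yields
\[
\Borch(\Psi) \;=\; \phi\,\xi^N\,\exp(-\Grit(\Psi)) \;=\; \phi\,\xi^N \;+\; (\phi\lvert_2 T_{-}(2))\,\xi^{2N} \;+\; O(\xi^{3N}),
\]
which matches the second Fourier--Jacobi coefficient of $\Grit(\phi)$.

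For stage (iii), consider $F := \Grit(\phi)/\Borch(\Psi)$: a $\Gamma_N^+$-invariant meromorphic function on $\HH_2$ whose expansion at the cusp at infinity is $1 + O(\xi^{2N})$. If $F$ is holomorphic on $\HH_2$, then $F$ is constant by the Koecher principle applied to weight $0$, and the leading term forces $F \equiv 1$. The main obstacle is to rule out poles of $F$ — equivalently, to establish that $\Borch(\Psi)$ is itself holomorphic with zero divisor dominated by that of $\Grit(\phi)$. By Theorem~\ref{bpthm}, holomorphy of $\Borch(\Psi)$ reduces to the effectivity of the multiplicity sums
\[
\sum_{n \geq 1} c(n^2 n_0 m_0,\, n r_0;\,\Psi) \;\geq\; 0
\]
over every primitive $(n_0,r_0,m_0)$ with $m_0 \geq 0$ and $\det(T_0) < 0$. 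This positivity is an arithmetic statement specific to the $\Psi$ attached to $\phi_{2,\mathbf{a}}$, requiring the full structure of the four-parameter family; it is a genuine weight-$2$ analogue of the divisor-matching performed in \cite{GPY1} for weights $k \geq 3$, and is where the bulk of the work will have to be done.
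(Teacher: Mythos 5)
Your proposal does not address the statement in question. The statement is Theorem~\ref{bpthm} itself: the general assertion, quoted from \cite{GN98}, that for any $\Psi\in J_{0,N}^{!}$ with integral singular Fourier coefficients the expression \eqref{bprod1} defines a meromorphic paramodular form of weight $c(0,0)/2$ with the stated Humbert divisor, multiplicities, infinite product expansion and character. The paper does not prove this; it imports it. What you have written is instead an outline of a proof of Theorem~\ref{thmwt2} (the theta-block conjecture for $\phi_{2,\mathbf{a}}$), and in every stage you \emph{invoke} Theorem~\ref{bpthm} as a black box --- so nothing in your text bears on the result you were asked to prove.

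Even read as an attempt at Theorem~\ref{thmwt2}, there is a genuine gap exactly where you place ``the bulk of the work.'' Your stage (iii) requires that $\Borch(\Psi)$ be holomorphic \emph{and} that its full zero divisor be dominated by that of $\Grit(\phi)$; effectivity of the multiplicity sums alone does not suffice, because $\Grit(\phi)$ must also be shown to vanish on every component of $\div(\Borch(\Psi))$ before K\"ocher's principle applies to the quotient. The paper shows this is a real obstruction: already for $\phi_{2,37}$ the product $\Borch(\psi_{0,37})$ acquires the additional component $\Hum\left(\begin{smallmatrix}6&15\\15&37\end{smallmatrix}\right)$ coming from the singular coefficient $q^6\zeta^{30}$, which is not among the theta-block divisors, and the vanishing of $\Grit(\phi_{2,37})$ there is a nontrivial linear relation among Fourier coefficients. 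The paper's actual proof circumvents this by working with the multivariable theta block $\Theta_{A_4}\in J_{2,A_4^\vee(5),1}$ on $2U\oplus A_4^\vee(-5)$: since $A_4^\vee(5)$ satisfies the $\norm_2$ condition (Lemma~\ref{lem2}), Lemma~\ref{LemmaN2} forces the divisor of $\Borch(\Psi_{A_4})$ to be governed solely by the $q^0$-term, the identity $\Grit(\Theta_{A_4})=\Borch(\Psi_{A_4})$ follows from K\"ocher's principle (Theorem~\ref{tha4}), and Theorem~\ref{thmwt2} is then obtained by the specialisation $\mathfrak{z}=z\mathbf{v}$, which commutes with $T_{-}(m)$. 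Without either that device or a direct proof of the infinitely many positivity-and-vanishing statements for all $\mathbf{a}\in\ZZ^4$, your argument does not close.
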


The automorphic product $\Borch(\Psi)$
is uniquely determined by the first two Fourier--Jacobi coefficients.
The first factor in \eqref{bprod1}
is a theta block $\Theta$ defined by the $q^0$-part of the Fourier expansion of $\Psi$.
The second one is the product $-\Theta \Psi$.
If $\Grit(\phi)$ is a Borcherds product, then 
$\phi=\Theta$ and $\phi\lvert T_{-}(2)=-\Theta\Psi$.
Thus, we conclude that $\phi$ is a theta block and 
$\Psi=-(\phi\lvert T_{-}(2))/\phi$.
Furthermore, we can show that $\phi$ has exactly vanishing order one in 
$q=e^{2\pi i\tau}$, otherwise the constant term $c(0,0)$ in the Fourier 
expansion of $-(\phi\lvert T_{-}(2))/\phi$ will be greater than $2k$ (see \cite[Proposition 7.2]{PSY} for a proof). We conjecture that $\phi$ should be a pure theta block, i.e. $c(0,r)\geq 0$ in $\Psi$. There is no example contrary to the claim at present.

In \cite{GPY1} the conjecture was proved
for the quasi-products of theta-functions  
$$
\eta^{3(8-l)}\vartheta_{d_1}\cdot \cdots \cdot \vartheta_{d_{l}} 
\in J_{12-l,N}, \qquad N=(d_1^2+\cdots d_\ell^2)/2\in \NN,
$$
and for the products of three {\it theta-quarks} 

$$
\theta_{a_1,b_1}\theta_{a_2,b_2}\theta_{a_3,b_3}
\in J_{3,N},\qquad
N=\sum_{i=1}^3(a_i^2+a_ib_i+b_i^2),
$$
where 
$\theta_{a,b}= {\vartheta_a\vartheta_b\vartheta_{a+b}}/{\eta}$
is a holomorphic Jacobi form of weight $1$ with
a character of order $3$ (see \cite{CG} and \cite{GSZ}).

The main idea of the proof of the conjecture for the mentioned 
above theta blocks is the following.
The odd theta-function $\vartheta(\tau, z)$ vanishes with order $1$
for $z=\lambda \tau+\mu$ for any $\lambda, \mu \in \ZZ$. Therefore 
we know the divisor of the theta block $\Theta_f$ of $q$-order one.
The Hecke operator $T_{-}(m)$ keeps this divisor of the theta block.  
Therefore we know a part of the divisor of the  lift of $\Theta_f$.
According to Theorem \ref{bpthm}, the divisor of $\Borch(\Psi)$ is 
determined by the singular Fourier coefficients $c(n,r)$ with 
$4nN-r^2<0$ of $\Psi$, where $N$ is the index of the Jacobi form 
$\Theta_f$. By the construction, $\Grit(\Theta_f)$ vanishes on the 
divisors determined by the $q^0$-term of $\Psi$. If $\Borch(\Psi)$ does not 
have another divisor then $\Grit(\Theta_f)/\Borch(\Psi)$ is a holomorphic 
modular form of weight zero and then equals a constant by K\"ocher's principle.
Unfortunately for this method of proof, $\Borch(\Psi)$ usually has additional divisors determined by 
singular Fourier coefficients from the higher $q^n$-terms.
In order to pass through  this difficulty, we represent $\Theta_f$ 
as a pull-back of a Jacobi form $\Theta_L$ in many variables associated to 
a certain positive definite lattice $L$. Since Jacobi forms in many 
variables have stronger 
symmetry, the function $\Psi_L=-\frac{\Theta_L \lvert T_{-}(2)}{\Theta_L}$ 
may have much simpler singular Fourier coefficients such that the divisor 
of $\Borch(\Psi_L)$ is determined only by the $q^0$-term of $\Psi_L$. 
In the next section, we give the corresponding definitions and results
in the case of many variables.

\section{Orthogonal modular forms and Jacobi forms of lattice index}

We start with the general setup (see \cite{Bo95}, \cite{CG} or \cite{GN18} for more details).  Let $M$ be an even lattice of signature 
$(2,n)$ with $n\geq 3$. Let
\begin{equation}
\cD(M)=\{[\omega] \in  \PP(M\otimes \CC):  (\omega, \omega)=0, (\omega,
\bar{\omega}) > 0\}^{+}
\end{equation}
be the corresponding  Hermitian symmetric domain of type IV (here $+$ 
denotes one of its two connected components). 
By $\Orth^+ (M)$ we denote the index $2$ subgroup of the integral 
orthogonal group  $\Orth(M)$ preserving $\cD(M)$.  
By $\widetilde{\Orth}^+(M)$ we denote the subgroup of $\Orth^+ (M)$ 
acting trivially on the discriminant group of $M$. For any $v\in M\otimes \QQ$ satisfying $
(v,v)<0$, the rational quadratic divisor associated to $v$ is defined as
\begin{equation*}
 \cD_v=\{ [Z]\in \cD(M) : (Z,v)=0\}. 
\end{equation*}
We assume that $M$ contains two hyperbolic planes and write 
$M=U\oplus U_1\oplus L(-1)$,
where $U=\ZZ e\oplus\ZZ f$ ($(e,e)=(f,f)=0$, $(e,f)=1$),
$U_1=\ZZ e_1\oplus\ZZ f_1$ are two hyperbolic planes 
and $L$ is an even integral positive definite lattice. 
We choose a basis of $M$ of the form $
(e,e_1,...,f_1,f)$, 
where $...$ denotes a basis of $L(-1)$.  
We fix a tube realization of the homogeneous domain $\cD(M)$ 
related to the 1-dimensional boundary component determined by the isotropic 
plane $F=\latt{e,e_1}$:
$$
\cH(L)=\{Z=(\tau,\mathfrak{z},\omega)\in \HH\times (L\otimes\CC)\times \HH: 
(\im Z,\im Z)>0\}, 
$$
where $(\im Z,\im Z)=2\im \tau \im \omega - 
(\im \mathfrak{z},\im \mathfrak{z})_L$. 
In this setting, a Jacobi form is a modular form with 
respect to the Jacobi group $\Gamma^J(L)$ which is the parabolic 
subgroup of $\Orth^+ (M)$ preserving the  isotropic plane $F$ and acting trivially on $L$.
This group is the semidirect product of $\SL_2(\ZZ)$ with the 
Heisenberg group $H(L)$ of $L$ (see \cite{G94} and \cite{CG}). Let $L^\vee$ denote the dual lattice of $L$ and $\rank(L)$ denote the rank of $L$.
We define Jacobi modular forms with respect to $L$.

\begin{definition}\label{JF2}
For $k\in\ZZ$, $t\in \NN$, a holomorphic 
function $\varphi : \HH \times (L \otimes \CC) \rightarrow \CC$ is 
called a {\it weakly holomorphic} Jacobi form of weight $k$ and index $t$ associated to $L$,
if it satisfies the functional 
equations
\begin{align}
\varphi \left( \frac{a\tau +b}{c\tau + d},\frac{\mathfrak{z}}{c\tau + d} 
\right)& = (c\tau + d)^k 
\exp{\left(i \pi t \frac{c(\mathfrak{z},\mathfrak{z})}{c 
\tau + d}\right)} \varphi ( \tau, \mathfrak{z} ),\\
\varphi (\tau, \mathfrak{z}+ x \tau + y)&= 
\exp{\bigl(-i \pi t ( (x,x)\tau +2(x,\mathfrak{z}))\bigr)} 
\varphi (\tau, \mathfrak{z} ),
\end{align}
for 
$\left(\begin{smallmatrix}
a & b \\ 
c & d
\end{smallmatrix}\right)   \in \SL_2(\ZZ)$, $x,y \in L$
and if it has a Fourier expansion  
\begin{equation}
\varphi ( \tau, \mathfrak{z} )= \sum_{n\geq n_0 }\sum_{\ell\in L^\vee}f(n,
\ell)q^n\zeta^\ell,
\end{equation}
where $n_0\in \ZZ$, $q=e^{2\pi i \tau}$ and $\zeta^\ell=e^{2\pi i (\ell,
\mathfrak{z})}$. 
If the Fourier expansion of $\varphi$ satisfies the condition
$(f(n,\ell) \neq 0 \Longrightarrow n \geq 0 )$
then $\varphi$ is called a {\it weak} Jacobi form.  If $( f(n,\ell) \neq 0 
\Longrightarrow 2n - (\ell,\ell) \geq 0 )$ (respectively $>0$)
then $\varphi$ is called a {\it holomorphic}
(respectively, {\it cusp}) Jacobi form. 
\end{definition}

We denote by $J^{!}_{k,L,t}$ (respectively, $J^{w}_{k,L,t}$, $J_{k,L,t}$, 
$J_{k,L,t}^{\text{cusp}}$) the vector space of weakly holomorphic Jacobi 
forms (respectively, weak, holomorphic or cusp Jacobi forms) 
of weight $k$ and index $t$. 
We note that the Jacobi forms  in one variable $J_{k,N}$
considered in the previous section are identical to the Jacobi forms 
$J_{k,A_1, N}$ for the lattice  $A_1=\latt{2}$ of rank $1$.
\smallskip

The additive Jacobi lifting and Borcherds product of Jacobi forms in many 
variables are  very similar to the case of one variable.
Let $\varphi \in J_{k,L,t}^{!}$.  For any positive integer $m$, we have 
\begin{equation}\label{T(m)}
\varphi \lvert_{k,t}T_{-}(m)(\tau,
\mathfrak{z})=m^{-1}\sum_{\substack{ad=m,a>0\\ 0\leq b <d}}a^k \varphi
\left(\frac{a\tau+b}{d},a\mathfrak{z}\right) \in J_{k,L,mt}^{!},
\end{equation}
and the Fourier coefficients of 
$\varphi \lvert_{k,t}T_{-}(m)(\tau,\mathfrak{z})$ are given by the formula
$$
f_m(n,\ell)=\sum_{\substack{a\in \NN\\ a \mid (n,\ell,m)}}a^{k-1} f
\left( \frac{nm}{a^2},\frac{\ell}{a}\right),
$$
where $a\mid(n,\ell,m)$ means that $a\mid (n,m)$ 
and $a^{-1}\ell\in L^\vee$. 

\begin{theorem}[see \cite{G94}]
Let $\varphi \in J_{k,L,1}$. Then the function 
$$ \Grit(\varphi)(Z)=f(0,0)G_k(\tau)+\sum_{m\geq 1}\varphi \lvert_{k,1}
T_{-}(m)(\tau,\mathfrak{z}) e^{2\pi i m\omega}
$$
is a modular form of weight $k$ for the stable orthogonal 
group $\widetilde{\Orth}^+(2U\oplus L(-1))$. Moreover, this modular form is symmetric i.e.
$\Grit(\varphi)(\tau,\mathfrak{z}, \omega)=
\Grit(\varphi)(\omega,\mathfrak{z}, \tau)$.

\end{theorem}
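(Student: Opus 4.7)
My plan follows the classical Maass--Gritsenko strategy: verify convergence, establish a Fourier-coefficient symmetry identity, prove invariance under the parabolic Jacobi subgroup, and reduce full $\widetilde{\Orth}^+(2U\oplus L(-1))$-modularity to a standard generation statement. For convergence, the Fourier coefficients of $\varphi\in J_{k,L,1}$ grow polynomially, and the explicit Hecke formula for $\varphi\lvert_{k,1} T_{-}(m)$ preserves polynomial growth in a controlled way, so the series converges absolutely and locally uniformly on the tube domain $\cH(L)$ to a holomorphic function.

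The symmetry claim $\Grit(\varphi)(\tau,\mathfrak{z},\omega)=\Grit(\varphi)(\omega,\mathfrak{z},\tau)$ falls out of the Fourier formula. Writing
$$
\Grit(\varphi)(Z)=\sum_{n,m\geq 0,\,\ell\in L^\vee} c(n,\ell,m)\, q^n \zeta^\ell e^{2\pi i m\omega},
$$
one has $c(n,\ell,m)=\sum_{a\mid (n,\ell,m)} a^{k-1} f(nm/a^2,\ell/a)$ for $nm\geq 1$, which is manifestly invariant under $n\leftrightarrow m$. The boundary cases reduce to $c(n,0,0)=f(0,0)\sigma_{k-1}(n)=c(0,0,n)$ coming from the expansion of $G_k$, and $c(n,\ell,0)=c(0,\ell,n)=0$ for $\ell\neq 0$, using that $\varphi$ is a \emph{holomorphic} (not merely weak) Jacobi form and that $L$ is positive definite, which force $f(0,\ell')=0$ for $\ell'\neq 0$.

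Modularity under the Jacobi group $\Gamma^J(L)\subset\widetilde{\Orth}^+(2U\oplus L(-1))$ is automatic from the construction: each $\varphi\lvert_{k,1} T_{-}(m)\in J_{k,L,m}$ satisfies the Jacobi transformation laws of Definition \ref{JF2}, and summing against $e^{2\pi i m\omega}$ preserves these term by term. Combined with the invariance under $\omega\mapsto\omega+1$ built into the $m$-expansion, this yields invariance under the $\SL_2(\ZZ)$, Heisenberg, and $\omega$-translation generators of $\Gamma^J(L)$.

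The final and most delicate step is to extend from $\Gamma^J(L)$ to the full $\widetilde{\Orth}^+(2U\oplus L(-1))$. The key lattice-theoretic fact is that this group is generated by $\Gamma^J(L)$ together with the swap involution $V\colon (\tau,\mathfrak{z},\omega)\mapsto (\omega,\mathfrak{z},\tau)$, which interchanges the two hyperbolic planes $U$ and $U_1$, fixes $L(-1)$, and hence acts trivially on the discriminant. Granting this generation (as established via Eichler transformations and the analysis of the two one-dimensional cusps $\latt{e,e_1}$ and $\latt{f,f_1}$, as in Gritsenko's \cite{G94}), the symmetry step combined with Jacobi-group invariance yields modularity under the full stable orthogonal group. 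This generation statement is the main obstacle; once granted, the rest reduces to Fourier-coefficient bookkeeping carried out above.
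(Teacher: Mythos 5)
Your proposal is correct and follows the same route as the source the paper cites for this theorem (the paper itself gives no proof, only the reference to \cite{G94}): convergence from polynomial growth of the coefficients, the manifest $n\leftrightarrow m$ symmetry of the Hecke-operator Fourier coefficients together with the correct treatment of the boundary terms $c(n,0,0)$ and $c(n,\ell,0)$, invariance under $\Gamma^J(L)$ from the Fourier--Jacobi structure, and the generation of $\widetilde{\Orth}^+(2U\oplus L(-1))$ by $\Gamma^J(L)$ and the involution $V$. The one input you take on faith --- that generation statement, proved via Eichler transvections for lattices containing $2U$ (cf.\ \cite{GHS09}) --- is exactly the lattice-theoretic lemma the cited proof also rests on, so nothing essential is missing.
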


We fix an ordering $\ell >0$ in  the dual lattice $L^\vee$ in a way similar to  positive root systems (see the bottom of page 825 in \cite{G18}). The long paper \cite{G18} contains, in particular, 
the results of the preprints arXiv:1005.3753 and arXiv:1203.6503
of the first author. In the last preprint the following theorem was proved. 
\begin{theorem}[see Theorem 4.2 in \cite{G18} for details]
\label{th:Borcherds}
Let 
$$
\varphi(\tau,\mathfrak{z})=\sum_{n\in\ZZ, \ell\in L^\vee}f(n,\ell)q^n 
\zeta^\ell \in J^{!}_{0,L,1}.
$$
Assume that $f(n,\ell)\in \ZZ$ for all $2n-(\ell,\ell)\leq 0$. 
There is a meromorphic modular form of weight 
$f(0,0)/2$ and character $\chi$ with respect to  
$\widetilde{\Orth}^+(2U\oplus L(-1))$ 
$$ 
\Borch(\varphi)=
\biggl(\Theta_{f(0,*)}
(\tau,\mathfrak{z})\exp{(2\pi i\, C\omega)}\biggr)
\exp \left(-\Grit(\varphi)\right),
$$
where
$C=\frac{1}{2\rank(L)}\sum_{\ell\in L^\vee}f(0,\ell)(\ell,\ell)$ and
\begin{equation}\label{FJtheta}
\Theta_{f(0,*)}(\tau,\mathfrak{z})
=\eta(\tau)^{f(0,0)}\prod_{\ell >0}
\biggl(\frac{\vartheta(\tau,(\ell,\mathfrak{z}))}{\eta(\tau)} 
\biggr)^{f(0,\ell)}
\end{equation}
is a general theta block.
The character $\chi$ is induced by the character of the theta-product
and by the relation $\chi(V)=(-1)^D$, where
$V: (\tau,\mathfrak{z}, \omega) \to (\omega,\mathfrak{z},\tau)$, 
and $D=\sum_{n<0}\sigma_0(-n) f(n,0)$.

The poles and zeros of $\Borch(\varphi)$ lie on the rational quadratic 
divisors $\cD_v$, where $v\in 2U\oplus L^\vee(-1)$ is a primitive vector 
with $(v,v)<0$. The multiplicity of this divisor is given by 
$$ \mult \cD_v = \sum_{d\in \ZZ,d>0 } f(d^2n,d\ell),$$
where $n\in\ZZ$, $\ell\in L^\vee$ such that 
$(v,v)=2n-(\ell,\ell)$ and $v-(0,0,\ell,0,0)\in 2U\oplus L(-1)$.

The same function has the following infinite product expansion
$$
\Borch(\varphi)(Z)=q^A \zeta^{\vec{B}} \xi^C\prod_{\substack{n,m\in\ZZ, \ell
\in L^\vee\\ (n,\ell,m)>0}}(1-q^n \zeta^\ell \xi^m)^{f(nm,\ell)}, 
$$ 
where $Z= (\tau,\mathfrak{z}, \omega) \in \cH (L)$, 
$q=\exp(2\pi i \tau)$, 
$\zeta^\ell=\exp(2\pi i (\ell, \mathfrak{z}))$, $\xi=\exp(2\pi i \omega)$,
the notation $(n,\ell,m)>0$ means that either $m>0$, or $m=0$ 
and $n>0$, or $m=n=0$ and $\ell<0$, and
$$
A=\frac{1}{24}\sum_{\ell\in L^\vee}f(0,\ell),\quad
\vec{B}=\frac{1}{2}\sum_{\ell>0} f(0,\ell)\ell.
$$
\end{theorem}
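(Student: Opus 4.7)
The plan is to reduce the theorem to Borcherds' original singular theta lift theorem for the lattice $M = 2U \oplus L(-1)$ of signature $(2,\rank(L)+2)$. The first step is theta decomposition: every $\varphi \in J^{!}_{0,L,1}$ admits a unique expansion $\varphi(\tau,\mathfrak{z}) = \sum_{\mu \in L^\vee/L} h_\mu(\tau)\, \theta_{L,\mu}(\tau,\mathfrak{z})$ whose components $h_\mu$ assemble into a weakly holomorphic vector-valued modular form of weight $-\rank(L)/2$ for the Weil representation of $L(-1)$; since adjoining two hyperbolic planes does not change the discriminant form, this is equivalent data to a vector-valued modular form for the Weil representation of $M$. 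The integrality hypothesis $f(n,\ell) \in \ZZ$ for $2n - (\ell,\ell) \leq 0$ is exactly the integrality of the principal part required by Borcherds.

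Apply Borcherds' theorem to this vector-valued input: it produces a meromorphic modular form of weight $f(0,0)/2$ with respect to $\widetilde{\Orth}^+(M)$ whose divisor is a sum of rational quadratic divisors $\cD_v$, with multiplicity $\sum_{d>0} f(d^2 n, d\ell)$ arising directly from Borcherds' multiplicity formula applied to a primitive $v$ of the prescribed shape. Passing to the tube realization $\cH(L)$ attached to the $1$-dimensional boundary component $F = \latt{e,e_1}$ and unwinding Borcherds' infinite product at this cusp yields the displayed expansion
$$
\Borch(\varphi)(Z) = q^A \zeta^{\vec{B}} \xi^C \prod_{(n,\ell,m)>0}(1-q^n \zeta^\ell \xi^m)^{f(nm,\ell)},
$$
where the Weyl-vector coordinates $A$, $\vec{B}$, $C$ are the standard quadratic combinations of the $q^0$-Fourier coefficients of $\varphi$; in particular the expression $C = \tfrac{1}{2\rank(L)}\sum_\ell f(0,\ell)(\ell,\ell)$ is the invariant-norm form of the classical Weyl vector computation.

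The substantive step is to identify this infinite product with the quasi-product $\Theta_{f(0,*)}(\tau,\mathfrak{z}) \exp(2\pi i C \omega) \exp(-\Grit(\varphi))$. Split the index set $\{(n,\ell,m) : (n,\ell,m)>0\}$ into the cases $m = 0$ and $m \geq 1$. The $m = 0$ factor
$$
q^A \zeta^{\vec{B}} \prod_{n>0,\, \ell \in L^\vee}(1-q^n \zeta^\ell)^{f(0,\ell)} \prod_{\ell<0}(1-\zeta^\ell)^{f(0,\ell)}
$$
reassembles into the theta block $\Theta_{f(0,*)}$ of \eqref{FJtheta} by applying the Jacobi triple product to each $\vartheta(\tau,(\ell,\mathfrak{z}))/\eta(\tau)$ factor, with $q^A$ absorbing the $\eta$-prefactor exponents and $\zeta^{\vec B}$ absorbing the $\zeta^{\pm 1/2}$ factors. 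For the $m \geq 1$ contribution, take $-\log$ and re-index: the inner coefficient sums collapse via the formula following \eqref{T(m)} to the Fourier coefficients of $\varphi \lvert_{0,1} T_{-}(m)$, yielding $\sum_{m \geq 1}(\varphi \lvert_{0,1} T_{-}(m))(\tau,\mathfrak{z}) e^{2\pi i m \omega} = \Grit(\varphi)$ (the $G_k$ term is absent since the weight is $0$). The character $\chi$ is then determined on generators: on the Jacobi subgroup it is the multiplier system inherited from $\Theta_{f(0,*)}$, and on the involution $V:(\tau,\mathfrak{z},\omega) \mapsto (\omega,\mathfrak{z},\tau)$ the sign $(-1)^D$ with $D = \sum_{n<0} \sigma_0(-n) f(n,0)$ emerges from Borcherds' standard sign computation applied to negative-norm lattice vectors on the $\tau$-$\omega$ diagonal.

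The main obstacle I expect is the bookkeeping in this $m = 0$ versus $m \geq 1$ split. The Jacobi-triple-product identification of the $m=0$ piece with $\Theta_{f(0,*)}$ depends delicately on the chosen positivity $\ell > 0$ on $L^\vee$ (with $\ell = 0$ contributing only to the $\eta^{f(0,0)}$ prefactor), and the logarithmic manipulation of the $m \geq 1$ piece involves interchanging triple sums that converge only conditionally. The cleanest route is to establish both equalities first on a deep tube neighborhood of the cusp where Borcherds' product converges absolutely, and then promote them to identifications of meromorphic modular forms on all of $\cH(L)$ by uniqueness of meromorphic continuation combined with the $\widetilde{\Orth}^+(M)$-equivariance coming from Borcherds' theorem.
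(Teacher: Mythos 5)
Your proposal is correct and follows what is essentially the standard (and intended) derivation: theta decomposition of $\varphi$ into a vector-valued form of weight $-\rank(L)/2$ for the Weil representation, Borcherds' singular theta lift applied to $2U\oplus L(-1)$, expansion of the product at the one-dimensional cusp $F=\latt{e,e_1}$, and the $m=0$ versus $m\geq 1$ split identifying the theta block \eqref{FJtheta} and $\exp(-\Grit(\varphi))$ via the triple product and the coefficient formula for $T_{-}(m)$. Note only that the paper itself offers no proof of Theorem \ref{th:Borcherds} --- it is imported from Theorem 4.2 of \cite{G18} --- so your sketch is to be compared with that reference, whose argument it matches in structure and in all the key identifications (including the convention that $\Grit(\varphi)$ in the exponential means the sum over $m\geq 1$ only, since the Eisenstein term is absent in weight $0$).
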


\begin{remark}\label{rem:divisor}
In the case of arbitrary signature we can write the divisors of 
$\Borch(\varphi)$ in a way similar to the case of signature $(2,3)$ 
considered in Theorem \ref{bpthm}.
By the Eichler criterion (see \cite[proof of Theorem 3.1]{G94a} and 
\cite[Proposition 3.3]{GHS09}), if $v_1, v_2\in 2U\oplus L^\vee(-1)$ are 
primitive, have the same norm, and the same image in the discriminant 
group, i.e. $v_1-v_2\in 2U\oplus L(-1)$, then there exists $g\in 
\widetilde{\Orth}^+(2U\oplus L(-1))$ such that $g(v_1)=v_2$.
Therefore, for a primitive vector $v\in 2U\oplus L^\vee(-1)$ with $
(v,v)<0$, there exists a vector $(0,n,\ell,1,0)\in  2U
\oplus L^\vee(-1)$ such that $(v,v)=2n-(\ell,\ell)$,
$v- (0,n,\ell,1,0)\in 2U\oplus L(-1)$ and  
$$
\widetilde{\Orth}^+(2U\oplus L(-1))\cdot 
\cD_v=\widetilde{\Orth}^+(2U\oplus L(-1))\cdot\cD_{(0,n,\ell,1,0)}.
$$
\end{remark}

It is known that the Fourier coefficients of $\varphi\in J_{k, L, 1}^{!}$ satisfy (see \cite[Lemma 2.1]{G94a})
\begin{equation}\label{eq:property}
f(n,\ell)=f(n+(x,x)/2+(\ell,x),\ell + x), \; \text{for any $x\in L$}.
\end{equation}
Thus the Fourier coefficient $f(n,\ell)$ depends only 
on the (hyperbolic) norm $2n-(\ell,\ell)$ of its index
and the image of $\ell$ in the discriminant group of $L$.
The divisor of the Borcherds product in Theorem \ref{th:Borcherds}
is defined by the so-called  {\it singular} Fourier coefficients 
$f(n,\ell)$ with  $2n-(\ell,\ell)<0$.
There are only a finite number of orbits of such coefficients that are 
supported because the norm  $2n-(\ell,\ell)$ of  the indices of the nontrivial Fourier coefficients is restricted from below. 

We say that the  lattice $L$ satisfies the condition $\norm_2$
(see \cite{GH} and \cite{GN18}) if 
\begin{equation}\label{norm2}
\norm_2:\ \forall\, \bar{c} \in L^\vee/L \quad \exists\, h_c \in \bar{c} 
\quad \text{such that} \quad (h_c,h_c)\leq 2.
\end{equation} 

\begin{lemma}\label{LemmaN2} 
Let $\varphi\in J_{0, L, 1}^{w}$ and assume that $L$  satisfies the 
condition $\norm_2$. Then the singular Fourier coefficients 
of $\varphi$ and the divisors of $\Borch(\varphi)$
are determined entirely by the $q^0$-term of $\varphi$, i.e. any divisor 
of $\Borch(\varphi)$ is of the form $\cD_{(0,0,\ell_1,1,0)}$.
\end{lemma}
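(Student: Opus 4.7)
The plan is to combine the transformation property \eqref{eq:property} of Fourier coefficients with the $\norm_2$ hypothesis and the weakness of $\varphi$, pinching the $q$-exponent of any nonzero singular coefficient down to zero.

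For the first claim, suppose $f(n,\ell)\neq 0$ and $2n-(\ell,\ell)<0$; weakness forces $n\geq 0$. By $\norm_2$, pick $h\in\bar\ell$ (the class of $\ell$ in $L^\vee/L$) with $(h,h)\leq 2$, and apply \eqref{eq:property} with the translation $x=h-\ell\in L$. A short computation yields $f(n,\ell)=f(n',h)$, where $n'=n-((\ell,\ell)-(h,h))/2$ satisfies $2n'=(2n-(\ell,\ell))+(h,h)<(h,h)\leq 2$, hence $n'\leq 0$. But weakness applied to $f(n',h)$ forces $n'\geq 0$, so $n'=0$, and necessarily $(h,h)\in\{1,2\}$. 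Every singular Fourier coefficient of $\varphi$ is therefore some $f(0,h)$, i.e.\ determined entirely by the $q^0$-slice of $\varphi$.

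For the divisor statement I would invoke Theorem \ref{th:Borcherds} and Remark \ref{rem:divisor}: every nontrivial divisor of $\Borch(\varphi)$ is $\widetilde{\Orth}^+$-equivalent to some $\cD_{(0,n_0,\ell_0,1,0)}$ with $2n_0-(\ell_0,\ell_0)<0$. Pick $h\in\bar{\ell_0}$ with $(h,h)\leq 2$ and set $n_1=n_0+((h,h)-(\ell_0,\ell_0))/2\in\ZZ$. By the Eichler criterion, $(0,n_0,\ell_0,1,0)$ and $(0,n_1,h,1,0)$ lie in the same $\widetilde{\Orth}^+$-orbit. Exactly as above, $n_1\leq 0$; on the other hand the orbit-invariant multiplicity $\sum_{d>0}f(d^2n_1,dh)$ is nonzero (the divisor is genuine), so weakness forces some $d^2n_1\geq 0$, whence $n_1\geq 0$. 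Thus $n_1=0$ and the divisor equals $\cD_{(0,0,h,1,0)}$ with $\ell_1=h$.

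I do not foresee a real obstacle: the whole argument is a short pinching between the $\norm_2$ upper bound $(h,h)\leq 2$ and the weak-Jacobi lower bound $n\geq 0$. The only bookkeeping point to be careful about is that the multiplicity of $\cD_v$ depends only on the $\widetilde{\Orth}^+$-orbit of the defining vector, which is immediate from \eqref{eq:property} since the Fourier coefficients of a Jacobi form depend only on the hyperbolic norm and the discriminant class of their index.
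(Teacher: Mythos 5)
Your proof is correct and follows essentially the same route as the paper: use the $\norm_2$ condition to replace $\ell$ by a short representative $h$ of its class, apply the periodicity \eqref{eq:property}, and pinch the resulting $q$-exponent between the weak-form bound $n'\geq 0$ and the bound $2n'<(h,h)\leq 2$, then transfer the conclusion to divisors via Remark \ref{rem:divisor}. One small inaccuracy: the parenthetical claim that necessarily $(h,h)\in\{1,2\}$ presumes $(h,h)$ is an integer, which fails for general $L^\vee$ (in the paper's main application $L=A_4^\vee(5)$ one has $(h,h)=\tfrac{2}{5}$); since only $0<(h,h)\leq 2$ is actually used, nothing breaks.
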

\begin{proof}
Suppose that $f(n,\ell)\neq 0$ is singular, i.e. $2n-(\ell, \ell)<0$.
There exists a vector $\ell_1 \in L^\vee$ 
such that $\ell - \ell_1\in L$ and $(\ell_1,\ell_1)\leq 2$ because $L$ 
satisfies $\norm_2$ condition. 
It is clear that $(\ell,\ell)-(\ell_1,\ell_1)$ is an even integer.
If $-2\leq 2n-(\ell, \ell)< 0$, it follows that 
$2n-(\ell, \ell)=-(\ell_1,\ell_1)$ and then $f(n,\ell)=f(0,\ell_1)$ 
by \eqref{eq:property}. If $2n-(\ell, \ell)<-2$, then 
there exists a negative integer $n_1$ satisfying 
$2n-(\ell, \ell)=2n_1-(\ell_1,\ell_1)$. Thus, there will be a nonzero 
Fourier  coefficient $f(n_1,\ell_1)$ with $n_1<0$, 
which contradicts the definition of weak Jacobi forms. 
According to Remark \ref{rem:divisor}, we have
$\widetilde{\Orth}^+(2U\oplus L(-1))\cdot 
\cD_{(0,n,\ell,1,0)}=\widetilde{\Orth}^+(2U\oplus L(-1))\cdot\cD_{(0,0,\ell_1,1,0)}$.
\end{proof}

Many examples of the lattices of type $\norm_2$ were found in \cite{GH}
and \cite{GN18} by the construction of reflective modular forms.
We prove below that $A_4^\vee(5)$ is also a lattice from this class.
 
By definition, $A_4=\{(a_1,\dots,a_5)\in \ZZ^5: a_1+\dots+a_5=0\}$.
We fix the set of simple roots in $A_4$  
\begin{align*}
&\alpha_1=(1,-1,0,0,0)& &\alpha_2=(0,1,-1,0,0)&\\
&\alpha_3=(0,0,1,-1,0)& &\alpha_4=(0,0,0,1,-1),&
\end{align*}
and  the set of $10$ positive roots in $A_4$
\begin{multline}\label{R-pos} 
R^+(A_4)=\{\alpha_1,\ \alpha_2,\ \alpha_3,\ \alpha_4,\ \alpha_1+\alpha_2,\  
\alpha_2+\alpha_3,\  \alpha_3+\alpha_4,\\
  \alpha_1+\alpha_2+\alpha_3,\  
\alpha_2+\alpha_3+\alpha_4,\  \alpha_1+\alpha_2+\alpha_3+\alpha_4\}.
\end{multline} 
The fundamental weights of $A_4$ are the vectors 
\begin{align*}
&w_1=\left( \frac{4}{5},-\frac{1}{5},-\frac{1}{5},-\frac{1}{5},-\frac{1}{5} 
\right),& &w_2=\left( \frac{3}{5},\frac{3}{5},-\frac{2}{5},-\frac{2}{5},-
\frac{2}{5} \right),&\\
&w_3=\left( \frac{2}{5},\frac{2}{5},\frac{2}{5},-\frac{3}{5},-\frac{3}{5} 
\right),& &w_4=\left( \frac{1}{5},\frac{1}{5},\frac{1}{5},\frac{1}{5},-
\frac{4}{5} \right).&
\end{align*}
So $(\alpha_i,w_j)=\delta_{ij}$ and 
$A_4^\vee/ A_4=\{ 0,w_1,w_2,w_3,w_4\}$. Therefore the renormalization
$$
A_4^\vee(5)=\ZZ w_1+\ZZ w_2+\ZZ w_3+\ZZ w_4,\quad 5(\cdot,\cdot), 
$$
is an even lattice of determinant 125 and its dual lattice is 
$(A_4^\vee(5))^\vee =\frac{1}{5}A_4(5)$.

\begin{lemma}\label{lem2}
The lattice $A_4^\vee(5)$ satisfies the condition $\norm_2$.
\end{lemma}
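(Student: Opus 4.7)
The plan is to enumerate the $125$ cosets of $L^\vee/L$ modulo the Weyl group symmetry and to exhibit a short representative in each orbit.

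Writing $L = A_4^\vee(5)$, we have $L^\vee = \frac{1}{5}A_4$ with the form $5(\cdot,\cdot)$. Any $v \in L^\vee$ has the form $v = \frac{1}{5}(b_1,\dots,b_5)$ with $(b_1,\dots,b_5) \in A_4$, and $(v,v)_L = \frac{1}{5}\sum b_i^2$; so the condition $\norm_2$ is equivalent to finding, in every coset, a representative with $\sum b_i^2 \leq 10$. A direct computation identifies
$$5 A_4^\vee = \{u \in A_4 : u_i \equiv u_j \pmod 5 \text{ for all } i,j\},$$
so $v = b/5$ and $v' = b'/5$ represent the same coset if and only if the tuples $(b_i \bmod 5)$ and $(b'_i \bmod 5)$ differ by a common additive constant in $\ZZ/5$.

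Since the Weyl group $S_5$ of $A_4$ acts by isometries through permutations of coordinates, I reduce to orbits under $S_5 \times \ZZ/5$, where the second factor is the simultaneous shift of all residues. These orbits correspond to multisets $\{r_1,\dots,r_5\} \subset \ZZ/5$ with $\sum r_i \equiv 0 \pmod 5$ taken up to shift. Scanning through the partitions of $5$, the types $4+1$, $3+2$ and $2+1+1+1$ are all incompatible with the sum-zero condition modulo $5$ (each forces two distinct residues to coincide), while $5$, $3+1+1$, $2+2+1$ and $1+1+1+1+1$ yield exactly six orbits with representative multisets
$$\{0^5\},\quad \{0^3,1,4\},\quad \{0^3,2,3\},\quad \{0^2,1^2,3\},\quad \{0^2,1,2^2\},\quad \{0,1,2,3,4\}.$$

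For each multiset I would choose a centered integer lift, using $4 \equiv -1$ and $3 \equiv -2 \pmod 5$ and a constant shift only where necessary to enforce $\sum b_i = 0$. The lifts are
$$(0,0,0,0,0),\ (0,0,0,1,-1),\ (0,0,0,2,-2),\ (0,0,1,1,-2),\ (-1,-1,0,1,1),\ (-2,-1,0,1,2),$$
with $\sum b_i^2 = 0, 2, 8, 6, 4, 10$ respectively. All six values are $\leq 10$, so every coset in $L^\vee/L$ contains a vector $h_c$ with $(h_c,h_c)_L \leq 2$, which proves $\norm_2$. The main obstacle is the enumeration step; once the symmetry group $S_5 \times \ZZ/5$ is correctly identified, the production of short lifts is elementary. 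Note that the bound is attained precisely on the last orbit (the Weyl-vector orbit), so the inequality in $\norm_2$ is sharp for this lattice.
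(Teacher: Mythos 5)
Your proof is correct and is essentially the paper's argument run in the opposite direction: the paper lists the vectors of norm $\frac25,\frac45,\frac65,\frac85,2$ in $\frac15 A_4(5)$ and counts that they fill out all $124$ nonzero classes of the discriminant group, while you parametrize the $125$ classes by residue multisets in $A_4/5A_4^\vee$ modulo shift and $S_5$, and then exhibit a short lift in each of the six orbits. The identification $5A_4^\vee=\{u\in A_4: u_i\equiv u_j \bmod 5\}$, the exclusion of the types $4+1$, $3+2$, $2+1+1+1$, and the six representatives with $\sum b_i^2\in\{0,2,8,6,4,10\}$ all check out, so this is a valid (and somewhat more systematic) write-up of the same verification.
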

\begin{proof}
Consider the discriminant group
$D=(A_4^\vee(5))^\vee/A_4^\vee(5)=\frac{1}{5}A_4(5)/A_4^\vee(5)$. We see from above
that $A_4^\vee/A_4$ is the cyclic group of order $5$.
We conclude that  $D\cong \ZZ/5\ZZ \times \ZZ/5\ZZ \times \ZZ/5\ZZ$
is the $5$-group of order $125$.

We see that any two vectors in $\frac{1}{5}A_4(5)$ of norm $\frac{2}{5}$
(or of norm $\frac{4}{5}$) are  not equivalent modulo $A_4^\vee(5)$.
The vectors of norm $\frac{6}{5}$ in 
$\frac{1}{5}A_4(5)$ have $30$ equivalent classes in $D$ and each class 
contains $2$ elements. The vectors of norm $\frac{8}{5}$ in 
$\frac{1}{5}A_4(5)$ have $20$ equivalent classes in $D$ and each class 
contains $3$ elements. The vectors of norm $2$ in $\frac{1}{5}A_4(5)$ have 
$24$ equivalent classes in $D$ and each class contains $5$ elements. 
Moreover,  the above $124$ classes and the zero element form a 
representative system of $D$. We call this representative system the 
\textbf{standard system} of $D$.

As a $5$-group, $D$ is the bouquet of $31$ cyclic subgroups of order $5$. 
In the standard representative system, ten of them are generated by 
$\frac{2}{5}$-vectors and each subgroup contains two $\frac{2}{5}$-vectors 
and two $\frac{8}{5}$-vectors. Fifteen subgroups are generated by 
$\frac{4}{5}$-vectors and each subgroup contains two 
$\frac{4}{5}$-vectors and two $\frac{6}{5}$-vectors.
Six subgroups are generated by $2$-vectors and each subgroup contains four 
$2$-vectors.

For any vector $\beta$ of norm $2$ in $\frac{1}{5}A_4(5)$, the lattice $A_4^
\vee(5)+\ZZ\beta$ is isomorphic to the root lattice $A_4$. 
\end{proof}

From the proof of the above lemma we obtain 

\begin{lemma}\label{lem3}
Let $\Orth(A_4^\vee(5))$ be the integral orthogonal group of $A_4^\vee(5)$ 
and $\Orth(D)$ be the orthogonal group of the discriminant group $D$ of 
$A_4^\vee(5)$. 
\begin{itemize}
\item[(a)] $\Orth(A_4^\vee(5))\cong\Orth(A_4)=W(A_4)\ltimes C_2$, where 
$W(A_4)$ is the Weyl group of $A_4$ and the subgroup $C_2$ is of order $2$ 
and generated by the operator $\mathfrak{z}\mapsto -\mathfrak{z}$.
\item[(b)] In the standard system,  $\Orth(A_4^\vee(5))$ acts transitively 
on the set of classes of the same norm.
\item[(c)] The natural homomorphism $\Orth(A_4^\vee(5))\to \Orth(D)$ is 
surjective.
\end{itemize}
\end{lemma}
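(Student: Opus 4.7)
My plan is to reduce all three parts to the classical structure of the root lattice $A_4$, together with a short order comparison on the discriminant side.

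The first observation is that rescaling the bilinear form by a positive integer does not change the group of real orthogonal transformations preserving a lattice, so $\Orth(A_4^\vee(5)) = \Orth(A_4^\vee)$. Since the dual of an integral lattice is determined by the lattice itself, any orthogonal transformation preserves $A_4$ if and only if it preserves $A_4^\vee$, giving $\Orth(A_4^\vee(5)) = \Orth(A_4)$. Part (a) then follows from the classical identification $\Orth(A_n) = W(A_n) \rtimes \langle -\id \rangle$ for $n \geq 2$, in which $-\id$ is central and corresponds, up to the longest Weyl element, to the Dynkin-diagram involution of $A_n$.

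For part (b), I would use the realization $A_4 = \{(a_0,\dots,a_4) \in \ZZ^5 : \sum a_i = 0\}$ and exhibit explicit representatives of the standard norm classes as permutations of specific sign patterns (for instance $\frac{1}{5}(1,-1,0,0,0)$ for norm $\frac{2}{5}$, $\frac{1}{5}(1,1,-1,-1,0)$ for norm $\frac{4}{5}$, and analogous shapes for the remaining three norms). The Weyl group $W(A_4) \cong S_5$ acts transitively by coordinate permutation on each shape, and the involution $-\id$ accounts for the overall sign, delivering transitivity on every norm class of the standard system.

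Part (c) is the most delicate. The plan is a direct order comparison. The discriminant form on $D \cong (\ZZ/5)^3$ is nondegenerate of rank $3$ over $\mathbb{F}_5$, so $|\Orth(D)| = |\Orth_3(\mathbb{F}_5)| = 240$, matching $|\Orth(A_4)| = 2 \cdot 5! = 240$. It therefore suffices to check that the natural map $\Orth(A_4^\vee(5)) \to \Orth(D)$ is injective. If $g \in \Orth(A_4)$ acts trivially on $D$, then for each simple root $\alpha_i$ one has $g(\alpha_i) - \alpha_i \in 5A_4^\vee$; but $g(\alpha_i)$ is again a root, so the norm of this difference is at most $8$, while the shortest nonzero vector of $5A_4^\vee$ has norm $25 \cdot \frac{4}{5} = 20$. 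Hence $g(\alpha_i) = \alpha_i$ for all $i$ and $g = \id$. The main obstacle is this injectivity step, which rests on the minimum-norm bound for $5A_4^\vee$ together with the elementary bound for norms of root differences; both are straightforward once arranged, after which surjectivity is immediate from the order equality.
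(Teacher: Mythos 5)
Your proposal is correct in substance, and it is worth comparing with the paper, which gives essentially no argument for this lemma (it is stated as an immediate consequence of the counting carried out in the proof of Lemma \ref{lem2}). For (a) your reduction $\Orth(A_4^\vee(5))=\Orth(A_4^\vee)=\Orth(A_4)=W(A_4)\times\langle-\id\rangle$ is exactly what the paper intends. For (c) you take a genuinely different and more self-contained route: the paper implicitly derives surjectivity from the explicit description of the standard system, whereas you compare orders, $|\Orth(A_4)|=2\cdot 5!=240=|\Orth_3(\mathbb F_5)|=|\Orth(D)|$, and then prove injectivity of $\Orth(A_4^\vee(5))\to\Orth(D)$ from the bound $(g\alpha_i-\alpha_i,g\alpha_i-\alpha_i)\le 8$ against the minimum $25\cdot\tfrac45=20$ of $5A_4^\vee$. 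This is clean and correct (note it yields the stronger statement $\widetilde\Orth(A_4^\vee(5))=\{1\}$, so the map is in fact an isomorphism), and it buys a verification that does not depend on the case-by-case class counting.

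One small gap in (b): your phrase ``$S_5$ acts transitively on each shape, and $-\id$ accounts for the sign'' silently assumes that, for each norm, the vectors of that norm in $\tfrac15 A_4(5)$ form a single $\Orth(A_4)$-orbit. This fails for norm $\tfrac85$ (i.e.\ $A_4$-norm $8$): the $60$ such vectors split into the $20$ of shape $(2,-2,0,0,0)$ and the $40$ of shapes $\pm(2,1,-1,-1,-1)$, which are distinct $\Orth(A_4)$-orbits. Transitivity on the $20$ classes still holds, because every class contains exactly one representative of shape $\tfrac15(2,-2,0,0,0)$ (for instance $\tfrac15(2,-2,0,0,0)-w_1=\tfrac15(-2,-1,1,1,1)$ shows the two shapes represent the same classes), and $S_5$ is transitive on that shape. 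So you should fix one shape per norm that meets every class exactly once and argue transitivity on that shape; as written, the step ``transitive on each shape $\Rightarrow$ transitive on the norm class'' needs this extra congruence check for the norm-$\tfrac85$ (and, for completeness, the norm-$\tfrac65$) classes.
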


We construct a reflective modular form for the lattice
$2U\oplus A_4^\vee(-5)$. To this end, we consider the Kac--Weyl denominator 
function of the affine Lie algebra of type $A_4$ (see \cite{KP}, \cite{G18}) 
\begin{equation}\label{den-func}
\Theta_{A_4}(\tau,\mathfrak{z})=\eta(\tau)^4\prod_{r\in R^+(A_4)}
\frac{\vartheta(\tau, (r,\mathfrak z))}{\eta(\tau)}, \quad \mathfrak{z}\in 
A_4\otimes \CC
\end{equation}
where $R^+(A_4)$ is the set of positive roots of $A_4$ defined in 
\eqref{R-pos}.

It is easy to check that $\Theta_{A_4}$ 
is a weak Jacobi form of weight 2 and 
index 1 for the lattice $A_4^\vee(5)$ (see \cite[Corollary 2.7]{G18} for the case of any 
root system). Moreover, it is anti-invariant under the action of the Weyl 
group $W(A_4)$ and invariant under the action of $C_2$.
From \cite{KP} it follows that  $\Theta_{A_4}(\tau,\mathfrak{z})$ is 
holomorphic at infinity but we do not need this fact here.

In the dual basis
$\mathfrak{z}=z_1w_1+z_2w_2+z_3w_3+z_4w_4\in A_4\otimes \CC$, $z_i\in \CC$,
we have
\begin{equation}\label{den-func-theta}
\begin{split}
\Theta_{A_4}(\tau,\mathfrak{z})=&
\eta^{-6}\vartheta(z_1)\vartheta(z_2)\vartheta(z_3)\vartheta(z_4)\vartheta(
z_1+z_2)\vartheta(z_2+z_3)\vartheta(z_3+z_4)\\
&\vartheta(z_1+z_2+z_3)\vartheta(z_2+z_3+z_4)\vartheta(z_1+z_2+z_3+z_4),
\end{split}
\end{equation}
where $\vartheta(z)=\vartheta(\tau,z)$. 
Next, we define a weak Jacobi form of weight $0$
\begin{equation}
\begin{split}
\Psi_{A_4}(\tau,\mathfrak{z})&=-\frac{(\Theta_{A_4}\lvert T_{-}(2))(\tau,
\mathfrak{z})}{\Theta_{A_4}(\tau,\mathfrak{z})}\\
&=\sum_{\substack{r\in A_4\\ (r,r)=2 }}\exp\left(2\pi i (r,\mathfrak{z}) 
\right) + 4 +O(q) \in J_{0,A_4^\vee(5),1}^w.
\end{split}
\end{equation}
The Fourier expansion above contains all types of singular 
Fourier coefficients of the weak Jacobi form of weight $0$ according to 
Lemma \ref{LemmaN2} and Lemma \ref{lem2}. Therefore, the corresponding Borcherds product
$\Borch(\Psi_{A_4})$ is a holomorphic modular form of weight $2$ 
with respect to $\widetilde{\Orth}^+(2U\oplus A_4^\vee(-5))$. 
Its first Fourier--Jacobi coefficient is equal to 
$\Theta_{A_4}$. Thus, $\Theta_{A_4}$ is a holomorphic Jacobi form of weight 
$2$. Moreover, the modular group of $\Borch(\Psi_{A_4})$  is determined by 
$\Theta_{A_4}$. By Lemma \ref{lem3}, we have
$$
\Phi_{2, A_4^\vee(5)}=\Borch(\Psi_{A_4})\in M_2(\Orth^+(2U\oplus A_4^\vee(-5)),\chi_2),
$$
and by Lemma \ref{LemmaN2} and Lemma \ref{lem3} (b), its divisor is equal to 
\begin{equation}\label{divA4}
\sum_{\substack{ v\in 2U\oplus \frac{1}{5}A_4(-5) \\ (v,v)=-\frac{2}{5}}} 
\cD_v \\
= \Orth^+(2U\oplus A_4^\vee(-5))\cdot \cD_{(0,0,\alpha_1/5,1,0)}.
\end{equation}
The function $\Grit(\Theta_{A_4})$ is a modular form of weight $2$ for $
\Orth^+(2U\oplus A_4^\vee(-5))$ which is anti-invariant under $W(A_4)$ and 
invariant 
under $C_2$. 

\begin{lemma}
Let $\ell$ be a nonzero vector in $L^\vee$. If $\varphi\in J_{k,L,1}$ vanishes to
at least order $m\ge 1$ along the divisor $(\ell, \mathfrak{z})\in \ZZ\tau+\ZZ$, 
then $\Grit(\varphi)$ vanishes to order at least $m$ on $\cD_{(0,0,\ell,1,0)}$.
\end{lemma}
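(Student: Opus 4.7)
The plan is to reduce the claim to vanishing on a single representative of the Humbert orbit and then verify it by inspecting the Fourier--Jacobi expansion of $\Grit(\varphi)$ term by term.

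First I would apply the Eichler criterion of Remark \ref{rem:divisor}. Assuming $\ell$ primitive in $L^\vee$ (the general case reducing to this), the vectors $(0,0,\ell,1,0)$ and $(0,0,\ell,0,0)$ both lie in $2U\oplus L^\vee(-1)$, are primitive, have the same norm $-(\ell,\ell)_L$, and have the same image $[\ell]$ in the discriminant group, so they are $\widetilde{\Orth}^+(2U\oplus L(-1))$-equivalent. In tube coordinates the second divisor is simply $\{(\ell,\mathfrak{z})=0\}$, so by invariance of $\Grit(\varphi)$ it suffices to prove the vanishing on this hyperplane. Moreover, the elliptic Jacobi action $\mathfrak{z}\mapsto\mathfrak{z}+x\tau+y$ with $x,y\in L$ shifts $(\ell,\mathfrak{z})$ by an arbitrary element of $\ZZ\tau+\ZZ$ while multiplying $\varphi$ by a nowhere-vanishing exponential, so the hypothesis is equivalent to $\varphi$ vanishing to order $m$ on the single hyperplane $\{(\ell,\mathfrak{z})=0\}$.

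Next I would decompose
\begin{equation*}
\Grit(\varphi)(\tau,\mathfrak{z},\omega)=f(0,0)\,G_k(\tau)+\sum_{m'\geq 1}\bigl(\varphi\lvert_{k,1}T_{-}(m')\bigr)(\tau,\mathfrak{z})\,e^{2\pi i m'\omega}
\end{equation*}
and check vanishing on $\{(\ell,\mathfrak{z})=0\}$ coefficient by coefficient. For the constant term, holomorphicity of $\varphi\in J_{k,L,1}$ combined with positive definiteness of $L$ forces $f(0,\ell')=0$ for all $\ell'\neq 0$, so $\varphi(\tau,0)=\sum_n f(n,0)q^n$. Since $\mathfrak{z}=0$ lies on the hyperplane, the hypothesis gives $\varphi(\tau,0)\equiv 0$, whence $f(0,0)=0$ and the constant term disappears. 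For each $m'\geq 1$, using
\begin{equation*}
\varphi\lvert_{k,1}T_{-}(m')=m'^{-1}\sum_{ad=m',\,0\leq b<d}a^k\,\varphi\bigl((a\tau+b)/d,a\mathfrak{z}\bigr),
\end{equation*}
each summand vanishes to order $\geq m$ on $\{(\ell,\mathfrak{z})=0\}$ because the linear change of variables $\mathfrak{z}\mapsto a\mathfrak{z}$ with $a>0$ preserves both the hyperplane $(\ell,a\mathfrak{z})=a(\ell,\mathfrak{z})=0$ and the order of vanishing inherited from the hypothesis.

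Every Fourier--Jacobi coefficient of $\Grit(\varphi)$ thus vanishes to order $\geq m$ on the hyperplane, and since the hyperplane is independent of $\omega$ the whole series does too, after which $\widetilde{\Orth}^+$-invariance propagates this to the full Humbert divisor $\cD_{(0,0,\ell,1,0)}$. The one mildly delicate step is the identity $f(0,0)=0$, which uses holomorphicity of $\varphi$ essentially; for a weakly holomorphic $\varphi$ one would need a different argument to control the Eisenstein contribution.
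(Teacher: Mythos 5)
Your proof is correct and rests on the same key observation as the paper's (one-sentence) proof: the parabolic Hecke operators $T_{-}(m')$ act linearly on $\mathfrak{z}$, so each summand $\varphi\bigl((a\tau+b)/d,a\mathfrak{z}\bigr)$ inherits the order-$m$ vanishing along the divisor, and the remaining steps (the vanishing of $f(0,0)$ and the Eichler-criterion identification of $\cD_{(0,0,\ell,1,0)}$ with the hyperplane, which the paper leaves implicit) are handled correctly. The only loose end is the parenthetical "the general case reducing to this" for non-primitive $\ell$, which is not immediate as stated, but is harmless since the direct computation $(\ell,a\mathfrak{z})=a(\ell,\mathfrak{z})\in\ZZ\tfrac{a\tau+b}{d}+\ZZ$ works on every component of the divisor without the primitivity assumption.
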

\begin{proof}
In the formula of the action of the parabolic Hecke operator  $T_{-}(m)$ (see \eqref{T(m)}) the action on $\mathfrak{z}$ is linear.
\end{proof}

By the above lemma, we get $\div (\Grit(\Theta_{A_4}))
\supset \div\left(\Borch(\Psi_{A_4})\right)$. Then we have 
$\Grit(\Theta_{A_4})=\Borch(\Psi_{A_4})$ by K\"ocher's 
principle. Thus we have proved the following.

\begin{theorem}\label{tha4}
The following identity is true for the modular form of weight $2$
\begin{equation}
\Phi_{2, A_4^\vee(5)}=\Grit(\Theta_{A_4})=\Borch(\Psi_{A_4})\in 
M_2(\Orth^+(2U\oplus A_4^\vee(-5)),\chi_2),
\end{equation}
and it has reflective divisor \eqref{divA4}. The character $\chi_2$ is of order $2$ and defined by the relations  
$\chi_2|_{\widetilde{\Orth}^+(2U\oplus A_4^\vee(-5))}=1$, $\chi_2|_{C_2}=1$
and  $\chi_2|_{W(A_4)}=\det$.
\end{theorem}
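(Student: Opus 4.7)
The plan is to follow the strategy sketched immediately before the theorem. I want to show that $\Grit(\Theta_{A_4})$ and $\Borch(\Psi_{A_4})$ are both holomorphic weight-$2$ modular forms for the same group $\Orth^+(2U \oplus A_4^\vee(-5))$, establish the divisor inclusion $\div \Grit(\Theta_{A_4}) \supset \div \Borch(\Psi_{A_4})$, conclude equality up to a constant by K\"ocher's principle, and fix the constant by matching the first Fourier--Jacobi coefficient.

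For the divisor of the Borcherds side I would combine three ingredients. Since $A_4^\vee(5)$ satisfies $\norm_2$ by Lemma \ref{lem2}, Lemma \ref{LemmaN2} reduces every singular Fourier coefficient of $\Psi_{A_4}$ to a $q^0$ coefficient; the explicit expansion
$$\Psi_{A_4}(\tau,\mathfrak{z}) = \sum_{r \in A_4,\ (r,r)=2} e^{2\pi i (r,\mathfrak{z})} + 4 + O(q)$$
shows that the singular contributions are indexed precisely by the norm-$2$ roots of $A_4$, and Lemma \ref{lem3}(b) collapses these into the single $\Orth^+$-orbit \eqref{divA4}. For the divisor of the Gritsenko side I use the product representation \eqref{den-func-theta}: each factor $\vartheta(\tau,(r,\mathfrak{z}))$ with $r \in R^+(A_4)$ has a simple zero along $(r,\mathfrak{z}) \in \ZZ\tau + \ZZ$, so the lemma immediately preceding the theorem forces $\Grit(\Theta_{A_4})$ to vanish on every $\cD_{(0,0,r,1,0)}$; invoking Lemma \ref{lem3}(b) once more identifies these with the Borcherds divisor.

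The divisor inclusion now makes $\Grit(\Theta_{A_4})/\Borch(\Psi_{A_4})$ a holomorphic weight-$0$ modular form on $\cD(2U \oplus A_4^\vee(-5))$, hence constant by K\"ocher's principle, and the constant is $1$ because both sides have first Fourier--Jacobi coefficient equal to $\Theta_{A_4}$. The character $\chi_2$ is then read off directly: it is trivial on $\widetilde{\Orth}^+(2U \oplus A_4^\vee(-5))$ by construction of the Gritsenko lift, $\chi_2|_{W(A_4)} = \det$ because $\Theta_{A_4}$ is the denominator function of $\widehat{A}_4$ and hence Weyl-antisymmetric, and $\chi_2|_{C_2} = 1$ because $\Theta_{A_4}$ is invariant under $\mathfrak{z} \mapsto -\mathfrak{z}$; the surjectivity statement in Lemma \ref{lem3}(c) is what guarantees that these three pieces of data indeed specify a character of $\Orth^+(2U \oplus A_4^\vee(-5))$. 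The main bookkeeping obstacle in this plan is checking that the norm-$2$ roots of $A_4$ (which live in $\tfrac{1}{5}A_4 = (A_4^\vee(5))^\vee$ with rescaled norm $10$) reduce modulo $A_4^\vee(5)$ to the norm-$2/5$ representative $\alpha_1/5$ labelling \eqref{divA4}; this is exactly the content of the reduction step in Lemma \ref{LemmaN2}.
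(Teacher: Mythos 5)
Your proposal is correct and follows essentially the same route as the paper: reduce the singular Fourier coefficients of $\Psi_{A_4}$ to the $q^0$-term via the $\norm_2$ property (Lemmas \ref{LemmaN2} and \ref{lem2}), identify the resulting divisor as the single orbit \eqref{divA4} via Lemma \ref{lem3}, use the lemma on the linearity of $T_{-}(m)$ in $\mathfrak{z}$ to see that $\Grit(\Theta_{A_4})$ vanishes on that divisor, and conclude by K\"ocher's principle with the constant fixed by the common first Fourier--Jacobi coefficient $\Theta_{A_4}$. Your closing remark correctly locates the only bookkeeping point (the rescaling $r\mapsto r/5$ identifying the norm-$2$ roots with the norm-$\tfrac{2}{5}$ classes in $(A_4^\vee(5))^\vee$), so nothing is missing.
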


\begin{remark}
The function $\Phi_{2,A_4^\vee(5)}$ is a reflective modular form of singular weight (see \cite{G18} and \cite{GN18} for the definitions of reflective divisors and reflective modular forms) and has been constructed by N.~Scheithauer in another way (see \cite{Sch06}). Scheithauer constructed this function at the zero-dimensional cusp related to $U\oplus U(5)\oplus A_4$ using the lifting from scalar-valued modular forms on congruence subgroups to modular forms for the Weil representation of $\SL_2(\ZZ)$. By \cite[Corollary 1.13.3]{Nik80}, we conclude
\begin{align*}
U\oplus U(5)\oplus A_4&\cong 2U\oplus A_4^\vee(5),
\end{align*}
because the two lattices belong to the same genus. 
Our construction corresponds to the one-dimensional cusp related to the 
decomposition $2U\oplus A_4^\vee(5)$ and it gives the additive Jacobi lifting of this reflective modular form. 
\end{remark}

\begin{remark}{\bf A hyperbolization of the affine Lie algebra 
$\hat{\mathfrak{g}}(A_4)$.}  The result of Theorem \ref{tha4}
has important applications to the theory of Lie algebras. 
It shows that there exists a hyperbolization of the affine 
Lie algebra $\hat{\mathfrak{g}}(A_4)$, i.e. a Lorentzian Kac--Moody algebra
with the following property: the first Fourier--Jacobi coefficient
of the automorphic Kac--Weyl--Borcherds denominator function of such generalized hyperbolic Kac--Moody algebra is the Kac--Weyl denominator function of the affine Lie algebra $\hat{\mathfrak{g}}(A_4)$. 
The generators and relations of this new algebra are defined by the Fourier coefficients of the lift
$\Grit(\Theta_{A_4})$.
This is a new example in a rather short series:
$\hat{\mathfrak{g}}(A_1)$ (see \cite{GN98}),
$\hat{\mathfrak{g}}(4A_1)$ and $\hat{\mathfrak{g}}(3A_2)$
(see \cite{G18}), $\hat{\mathfrak{g}}(A_2)$ (see \cite{GHS13}).
Twenty-three root systems 
of Niemeier lattices are also this type of examples (see \cite{G18}).  
\end{remark}

\section{Applications}\label{sec:4}

\subsection{The proof of Theorem \ref{thmwt2}.}
Now we can prove the main result of the paper that the theta-block
conjecture is true for the theta blocks of type
$\frac{10-\vartheta}{6-\eta}$.
We prove Theorem \ref{thmwt2} by considering a specialisation 
of  the modular form of Theorem \ref{tha4} (compare with the proof of 
\cite[Theorem 8.2]{GPY1}).
It is known that paramodular forms for $\Gamma_t$ can be viewed as modular 
forms for $\widetilde{\Orth}^+(2U\oplus\latt{-2t})$ (see \cite{G94a},
\cite{GN98}).
Let $z\in \CC$, $\mathbf{a}=(a_1,a_2,a_3,a_4)\in \ZZ^4$, and $\mathbf{v}
=a_1w_1+a_2w_2+a_3w_3+a_4w_4\in A_4^\vee(5)$.
The specialisation $\Phi_{2, A_4^\vee(5)}(\tau, z\mathbf{v},\omega)$ 
is a modular form with respect to 
$\widetilde{\Orth}^+(2U\oplus\latt{-2N(\mathbf{a})})$.
In fact, the index $N(\mathbf{a})$ is the half of the (square) norm of the vector $\mathbf{v}$ in $A_4^\vee(5)$.
By \eqref{den-func} and \eqref{den-func-theta} we get 
$\Theta_{A_4}(\tau,z\mathbf{v})=\phi_{2,\mathbf{a}}(\tau, z)$. 
We consider only such $\mathbf{a}$ that $\phi_{2,\mathbf{a}}\not\equiv 0$.
Due to the linear action of the parabolic Hecke operators $T_{-}(m)$
on the coordinate $\mathfrak{z}$, the pull-back 
$\Grit(\Theta_{A_4})(\tau,z\mathbf{v},\omega)$  is equal to 
$\Grit(\phi_{2,\mathbf{a}})\not\equiv 0$. 
The Borcherds product is also described in terms of the operators 
$T_{-}(m)$ (see Theorem \ref{th:Borcherds}). Therefore 
$\Borch(\Psi_{A_4})(\tau, z\mathbf{v},\omega)$ is the Borcherds product
defined by $\Psi_{A_4}(\tau, z\mathbf{v})$. 
Moreover, 
$\Psi_{A_4}(\tau,z\mathbf{v})=-\frac{\phi_{2,\mathbf{a}}|T_{-}(2)}
{\phi_{2,\mathbf{a}}}$ because the parabolic Hecke operator $T_{-}(2)$ 
commutes with the specialisation $\mathfrak{z}=z\mathbf{v}$.
This gives the relation
$$
\Grit(\phi_{2,\mathbf{a}})
=\Borch\left(-\frac{\phi_{2,\mathbf{a}}|T_{-}(2)}
{\phi_{2,\mathbf{a}}}\right).
$$

\subsection{Explicit divisors of the paramodular forms of weight 2
and linear relations between Fourier coefficients.}\label{Subsec:4.2}
Our construction  gives explicit formulas for the divisors of 
the modular forms from Theorem \ref{thmwt2}.

The first modular form corresponds to $\mathbf{a}=(1,1,1,1)$.
In this case  we obtain the first complementary 
Jacobi--Eisenstein series 
$E_{2,25;1}=\eta^{-6}\vartheta^4\vartheta_2^3\vartheta_3^2\vartheta_4\in J_{2,25}$ (see \cite[\S 2]{EZ}).
The singular Fourier coefficients of $\psi_{0,25}=-(E_{2,25;1}\lvert T_{-}(2))/E_{2,25;1}$ are represented by 
$\sing(\psi_{0,25})=\zeta^4+2\zeta^3+3\zeta^2+4\zeta+4$.
Thus the divisor of $\Grit(E_{2,25;1})$ is completely 
defined by divisors of the theta block, i.e. the 
corresponding Borcherds product has no additional divisor.

When the index is larger than $25$, the corresponding Borcherds products have additional divisors in general and the additional divisors will yield certain relations between the Fourier coefficients of theta blocks.  We first recall one example in \cite{GPY1}. 
Let $\mathbf{a}=(1,1,1,2)$, we get $\phi_{2,37}=\eta^{-6}\vartheta^3\vartheta_2^3\vartheta_3^2\vartheta_4\vartheta_5 \in J_{2,37}^{\text{cusp}}$. Note that $\dim J_{2,37}^{\text{cusp}}=1$. Let $\psi_{0,37}=-(\phi_{2,37}\lvert T_{-}(2))/\phi_{2,37}$. The singular Fourier coefficients of $\psi_{0,37}$ are represented by 
$$
\sing(\psi_{0,37})=\zeta^5+\zeta^4+2\zeta^3+3\zeta^2+3\zeta+4+q^6\zeta^{30}.
$$
The coefficient $q^6\zeta^{30}$ determines the divisor which does not 
appear  in the theta block $\phi_{2,37}$. We call it the additional divisor 
of $\Grit(\phi_{2,37})$. This is the last term  in the formula for the 
full divisor of  $\Borch(\psi_{0,37})$: 
\begin{align*}
& 10 \Hum\left(\begin{array}{cc}
0 & 1/2 \\ 
1/2 & 37
\end{array}  \right)+4\Hum\left(\begin{array}{cc}
0 & 1 \\ 
1 & 37
\end{array}  \right)+2\Hum\left(\begin{array}{cc}
0 & 3/2 \\ 
3/2 & 37
\end{array}  \right)\\
&+\Hum\left(\begin{array}{cc}
0 & 2 \\ 
2 & 37
\end{array}  \right)+\Hum\left(\begin{array}{cc}
0 & 5/2 \\ 
5/2 & 37
\end{array}  \right)+
 \Hum\left(\begin{array}{cc}
6 & 15 \\ 
15 & 37
\end{array}  \right).
\end{align*}
The fact that $\Grit(\phi_{2,37})$ vanishes on the additional divisor
is equivalent to the fact (see \cite[Page 170]{GPY1}) that the Fourier coefficients of $\phi_{2,37}$ satisfy the linear relation
\begin{equation}
\forall n,r\in \ZZ, \quad \sum_{a\in \ZZ}c\left(6a^2+na,30a+r;\phi_{2,37}\right)=0.
\end{equation}

Next, we establish similar relations for other Jacobi forms of weight 2 and small indices. We know from \cite{EZ} that the dimensions of the spaces of Jacobi forms of weight 2 and index $43$, $50$, $53$ are all 1. The generators  can be constructed by the theta blocks
\begin{align*}
&\mathbf{a}=(-1,5,-1,-2):& &\phi_{2,43}=\eta^{-6}\vartheta^3\vartheta_2^2\vartheta_3^2\vartheta_4^2\vartheta_5 \in J_{2,43}^{\text{cusp}}\\
&\mathbf{a}=(2,-1,-3,6):& &\phi_{2,50}=\eta^{-6}\vartheta^2\vartheta_2^3\vartheta_3^2\vartheta_4^2\vartheta_6 \in J_{2,50}\\
&\mathbf{a}=(1,-6,3,1):& &\phi_{2,53}=\eta^{-6}\vartheta^3\vartheta_2^2\vartheta_3^2\vartheta_4\vartheta_5\vartheta_6 \in J_{2,53}^{\text{cusp}}.
\end{align*}
We put $\psi_{0,m}=-(\phi_{2,m}\lvert T_{-}(2))/\phi_{2,m}$ for 
$m=43$, $50$, $53$.
Their singular Fourier coefficients are represented by 
\begin{align*}
\sing(\psi_{0,43})&=\zeta^5+2\zeta^4+2\zeta^3+2\zeta^2+3\zeta+4+q^2\zeta^{19}+q^3\zeta^{23},\\
\sing(\psi_{0,50})&=\zeta^6+2\zeta^4+2\zeta^3+3\zeta^2+2\zeta+4+q^5\zeta^{32}+2q^{11}\zeta^{47}+2q^{12}\zeta^{49},\\
\sing(\psi_{0,53})&=\zeta^6+\zeta^5+\zeta^4+2\zeta^3+2\zeta^2+3\zeta+4+q\zeta^{15}+q^{2}\zeta^{21}+q^{6}\zeta^{36}.
\end{align*}
Since the functions $\Grit(\phi_{2,j})$ vanish on the additional divisors, their Fourier coefficients satisfy the similar relations for all additional divisors.
For example, in the case of $\phi_{2,43}$, we obtain two relations
\begin{align*}
&\forall n,r\in \ZZ,& &\sum_{a\in \ZZ}c\left(2a^2+na,19a+r;\phi_{2,43}\right)=0&\\
&\forall n,r\in \ZZ,& &\sum_{a\in \ZZ}c\left(3a^2+na,23a+r;\phi_{2,43}\right)=0.&
\end{align*}

\subsection{Reflective modular form of weight 12}\label{wight12}
We give one more property of the lattice $A_4^\vee(5)$.
\begin{lemma}\label{lem4}
There is a primitive embedding of $A_4^\vee(5)$ into the Leech lattice
$\Lambda_{24}$.  
\end{lemma}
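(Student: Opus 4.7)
The plan is to exhibit $A_4^\vee(5)$ as the fixed sublattice of an order-$5$ isometry of the Leech lattice; the fixed sublattice of any linear group action on a torsion-free $\mathbb{Z}$-module is automatically primitive.

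Concretely, I would take an element $\sigma \in \mathrm{Co}_0 = \Orth(\Lambda_{24})$ of order $5$ whose character on the natural $24$-dimensional representation equals $-1$. Any element of order $5$ acts rationally as $1^a \oplus \Phi_5^b$ with $a + 4b = 24$ and trace $a - b$, so the value $-1$ forces $a = 4$; hence the fixed sublattice $F := \Lambda_{24}^\sigma$ has rank $4$. It is automatically primitive in $\Lambda_{24}$: if $v \in \Lambda_{24}$ and $kv \in F$ for some nonzero $k$, then $k(\sigma v - v) = 0$ in the torsion-free module $\Lambda_{24}$ forces $\sigma v = v$, hence $v \in F$. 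The lattice $F$ is even and positive-definite by restriction; the induced $\mathbb{Z}[\zeta_5]$-module structure on the rank-$20$ coinvariant sublattice, together with unimodularity of $\Lambda_{24}$, forces $\det F = 5^3$ and discriminant group $(\mathbb{Z}/5\mathbb{Z})^3$.

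It remains to identify $F$ with $A_4^\vee(5)$ as $\mathbb{Z}$-lattices. Both are even, positive-definite of rank $4$ and determinant $5^3$, with discriminant groups of order $125$, so they lie in the same genus. One finishes either by verifying that this genus has class number one (via the Minkowski--Siegel mass formula, or by a uniqueness criterion in the spirit of \cite[Corollary~1.13.3]{Nik80} used in the Remark after Theorem~\ref{tha4}), or by computing the Gram matrix of $F$ directly in a concrete coordinate model of $\Lambda_{24}$ such as Conway's MOG. The main obstacle is precisely this identification: the existence of \emph{some} primitive embedding into $\Lambda_{24}$ of a rank-$4$ even lattice with discriminant form $q_{A_4^\vee(5)}$ follows from Nikulin's criterion (Proposition~$1.15.1$ of \cite{Nik80}) as soon as one produces a positive-definite even complement of rank $20$ with the right discriminant form, but pinning down the isomorphism class $F \cong A_4^\vee(5)$ requires real work.
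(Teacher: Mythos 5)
Your approach is essentially the paper's: the authors also realize $A_4^\vee(5)$ as the fixed-point sublattice of an automorphism of $\Lambda_{24}$ (of cycle shape $5^5/1$, i.e.\ exactly the order-$5$ element of trace $-1$ that you describe), and primitivity is automatic for fixed sublattices just as you argue. The identification of this fixed lattice with $A_4^\vee(5)$, which you correctly single out as the remaining nontrivial step, is simply delegated in the paper to \cite[\S 9]{Sch04}.
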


\begin{proof}
In fact, the lattice $A_4^\vee(5)<\Lambda_{24}$ is a fixed point sublattice 
with respect to the automorphism of cycle shape $5^5/1$.  We refer to 
\cite[\S 9]{Sch04} for details.
\end{proof}

Any sublattice of $\Lambda_{24}$ with property $\norm_2$ 
produces a strongly $2$-reflective modular form.
This is the pull-back of the Borcherds reflective modular form 
$\Phi_{12}\in M_{12}(\Orth^+(II_{2,26}), \det)$ 
(see \cite[Theorem 4.2]{GN18}). According to Lemma \ref{lem2}, 
we obtain a new example of reflective modular forms.

\begin{theorem}\label{Phi12}
Consider the primitive sublattice 
$2U\oplus A_4^\vee(-5)\hookrightarrow 
2U\oplus\Lambda_{24}(-1)$
and the embedding of the homogenous domains
$\cD(2U\oplus A^\vee_4(-5))\hookrightarrow \cD(2U\oplus \Lambda_{24}(-1))$.
Then 
$$
\Phi_{12, A_4^\vee(-5)}=
\Phi_{12}|_{\cD(2U\oplus A_4^\vee(-5))}\in
M_{12}(\widetilde\Orth^+(2U\oplus A_4^\vee(-5)), \det)
$$
is a strongly reflective modular form with complete $(-2)$-divisor
$$
\div(\Phi_{12, A_4^\vee(-5)})=\sum_{\substack{r\in 2U\oplus A_4^\vee(-5)\\(r,r)=-2}}\cD_r.
$$
\end{theorem}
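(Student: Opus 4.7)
The strategy is to apply the pullback principle recalled immediately before the statement, from \cite[Theorem 4.2]{GN18}: a primitive embedding of an even positive-definite lattice satisfying $\norm_2$ into the Leech lattice $\Lambda_{24}$ produces a strongly $2$-reflective modular form of weight $12$ by restricting Borcherds' function $\Phi_{12}$. Lemma \ref{lem4} supplies the primitive embedding $A_4^\vee(5) \hookrightarrow \Lambda_{24}$, and Lemma \ref{lem2} verifies the $\norm_2$ hypothesis for $A_4^\vee(5)$, so both ingredients of the general construction are in place.

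The embedding extends to a primitive embedding $2U \oplus A_4^\vee(-5) \hookrightarrow 2U \oplus \Lambda_{24}(-1) = II_{2,26}$ and induces an embedding of the corresponding type IV domains. The restriction $\Phi_{12,A_4^\vee(-5)}$ is then a priori a meromorphic modular form of weight $12$ with character $\det$ for the subgroup of $\Orth^+(II_{2,26})$ that preserves the subdomain, and this subgroup contains $\widetilde{\Orth}^+(2U \oplus A_4^\vee(-5))$. To check that the restriction is not identically zero, I observe that vanishing would force some primitive $(-2)$-vector of $II_{2,26}$ to lie in the orthogonal complement of $2U \oplus A_4^\vee(-5)$ inside $II_{2,26}$, i.e., inside the orthogonal complement $A_4^\vee(-5)^\perp \subset \Lambda_{24}(-1)$. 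This complement is a negative-definite sublattice of $\Lambda_{24}(-1)$, and since the Leech lattice has no vector of norm $2$, it contains no $(-2)$-vector. Hence $\Phi_{12,A_4^\vee(-5)}$ is a nonzero holomorphic modular form of weight $12$.

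For the divisor, recall that $\Phi_{12}$ has simple zeros exactly along $\cD_r$ for each primitive $(-2)$-vector $r \in II_{2,26}$. Under pullback, each such $\cD_r$ meets the subdomain in the rational quadratic divisor $\cD_{\pi(r)}$, where $\pi$ denotes the orthogonal projection onto $(2U \oplus A_4^\vee(-5)) \otimes \QQ$. Combining the $\norm_2$ property of $A_4^\vee(5)$ with the absence of roots in $\Lambda_{24}$, one shows, via the Eichler criterion as in Remark \ref{rem:divisor}, that every such $\pi(r)$ lies in the $\widetilde{\Orth}^+(2U \oplus A_4^\vee(-5))$-orbit of a genuine $(-2)$-vector of the sublattice; conversely, every $(-2)$-vector of $2U \oplus A_4^\vee(-5)$ lifts primitively to a $(-2)$-vector of $II_{2,26}$ (choose a rank-$20$ orthogonal complement lattice and extend). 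This produces the claimed divisor formula, with each $\cD_r$ appearing with multiplicity one, and simultaneously identifies the automorphy character with $\det$. The main obstacle is precisely this divisor-matching step: the $\norm_2$ hypothesis is what forces every singular projection $\pi(r)$ to be equivalent, modulo $2U \oplus A_4^\vee(-5)$, to a norm $-2$ representative, ruling out any spurious zeros coming from higher-norm projections.
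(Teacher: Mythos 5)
Your proposal is correct and takes essentially the same route as the paper, which likewise deduces the theorem directly from \cite[Theorem 4.2]{GN18} (any primitive sublattice of the Leech lattice satisfying $\norm_2$ yields a strongly $2$-reflective pull-back of $\Phi_{12}$) together with Lemma \ref{lem4} for the primitive embedding and Lemma \ref{lem2} for the $\norm_2$ condition. One small caution about your supplementary divisor discussion: being equivalent modulo $2U\oplus A_4^\vee(-5)$ to a norm $-2$ vector does not by itself make $\cD_{\pi(r)}$ a $(-2)$-divisor; the actual mechanism is that a projection $\pi(r)$ with $-2<(\pi(r),\pi(r))<0$ would produce a nonzero vector of norm less than $2$ in $\bigl(A_4^\vee(5)^{\perp}\bigr)^{\vee}$, which $\norm_2$ combined with the minimal norm $4$ of the Leech lattice rules out, forcing $(\pi(r),\pi(r))=-2$ outright.
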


\begin{corollary} The modular form $\Phi_{12, A_4^\vee(-5)}$ determines 
a Lorentzian Kac--Moody algebra.  For this algebra, the $2$-reflective Weyl group of $U\oplus A_4^\vee(-5)$ has a Weyl vector of norm $0$, i.e. it has parabolic type.
\end{corollary}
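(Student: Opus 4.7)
The strategy has two parts: constructing the Lorentzian Kac--Moody algebra, and then locating its Weyl vector on the isotropic cone of the hyperbolic lattice.

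First, by Borcherds' general correspondence \cite{Bo95,Bo98}, in the form developed by Gritsenko--Nikulin \cite{GN98}, any strongly $(-2)$-reflective holomorphic Borcherds product on the Type IV domain attached to $2U\oplus K$ is the denominator function of a Lorentzian Kac--Moody superalgebra whose real Weyl group is precisely the $2$-reflective subgroup $W^{(2)}$ of $\Orth^+(U\oplus K)$ generated by reflections in the $(-2)$-vectors of the hyperbolic lattice $U\oplus K$. By Theorem~\ref{Phi12}, $\Phi_{12, A_4^\vee(-5)}$ is exactly such a form with $K=A_4^\vee(-5)$, and as a pullback of the Borcherds product $\Phi_{12}$ it is itself a Borcherds product. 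This produces the desired algebra and identifies $W^{(2)}$ on $U\oplus A_4^\vee(-5)$ as its real Weyl group.

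Next, the Weyl vector $\rho\in U\oplus A_4^\vee(-5)_\QQ$ can be read off from the Borcherds product expansion at the chosen $1$-dimensional cusp: writing
$$\Phi_{12, A_4^\vee(-5)}(Z)=q^{A}\zeta^{\vec B}\xi^{C}\prod_{(n,\ell,m)>0}(1-q^n\zeta^\ell\xi^m)^{f(nm,\ell)}$$
in the tube coordinates $(\tau,\mathfrak z,\omega)$, one has $\rho=(A,\vec B,C)$ and $(\rho,\rho)=2AC-(\vec B,\vec B)_{A_4^\vee(-5)}$. I would transfer these leading exponents from $\Phi_{12}$ via the primitive embedding of Lemma~\ref{lem4}. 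At the Leech cusp of $\Phi_{12}$, the input Jacobi form has $q^0$-term equal to the constant $24$ (so $f(0,0)=24$ and $f(0,\ell)=0$ for $\ell\ne 0$), since the first Fourier--Jacobi coefficient of $\Phi_{12}$ is the pure eta product $\eta(\tau)^{24}$. The formulas of Theorem~\ref{th:Borcherds} then give $A=1$, $\vec B=0$ and $C=0$. Restricting $\mathfrak z$ to $A_4\otimes\CC\subset\Lambda_{24}\otimes\CC$ preserves the leading exponents: the pullback's $q^0$-term still has $f^{A_4}(0,0)=24$ and $f^{A_4}(0,\ell)=0$ otherwise, so $\rho=(1,0,0)\in U\oplus A_4^\vee(-5)$. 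This vector lies in the first hyperbolic plane $U$ and is isotropic, whence $(\rho,\rho)=0$. Therefore $W^{(2)}$ admits a norm-zero Weyl vector and is of parabolic type.

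The main obstacle is verifying that the tube realization of $\Phi_{12, A_4^\vee(-5)}$ used in Theorem~\ref{Phi12} is compatible, via the sublattice embedding $2U\oplus A_4^\vee(-5)\hookrightarrow II_{2,26}$, with the Leech cusp of $\Phi_{12}$, so that the leading Borcherds exponents transfer directly. A purely internal alternative bypasses this matching of cusps: work with the input Jacobi form $\varphi\in J^!_{0,A_4^\vee(5),1}$ whose Borcherds product is $\Phi_{12, A_4^\vee(-5)}$ (produced by applying Borcherds' construction directly to $A_4^\vee(5)$), extract its $q^0$-term, and compute $A,\vec B,C$ from Theorem~\ref{th:Borcherds}; here the only nontrivial point is showing that $f(0,\ell)=0$ for $\ell\ne 0$, equivalently that the first Fourier--Jacobi coefficient of $\Phi_{12, A_4^\vee(-5)}$ is a pure $\eta$-product with no $\vartheta$-factors.
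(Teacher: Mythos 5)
Your proposal is correct, and it in fact supplies more detail than the paper, whose entire proof is a one-line citation to the general construction of Lorentzian Kac--Moody algebras from $2$-reflective modular forms in \cite{GN18}. Your first paragraph is exactly that citation; your second paragraph makes explicit the ``parabolic type'' assertion, which the paper leaves to the reader (or to \cite{GN18}). The Weyl-vector computation is right: at the Leech cusp the input Jacobi form for $\Phi_{12}$ is $\Theta_{\Lambda_{24}}/\Delta=q^{-1}+24+O(q)$, and since the Leech lattice has no vectors of norm $\le 2$ other than $0$, one has $f(0,\lambda)=24\,\delta_{\lambda,0}$, whence $A=1$, $\vec B=0$, $C=0$ and $\rho=(1,0,0)$ is a nonzero isotropic vector. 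The obstacle you flag at the end is actually a non-issue: the embedding in Theorem \ref{Phi12} is by construction the identity on the $2U$ summand (it is induced from $A_4^\vee(5)\hookrightarrow\Lambda_{24}$ of Lemma \ref{lem4}), so the one-dimensional cusps match automatically and the restricted $q^0$-term is $\sum_{\pi(\lambda)=\ell}f(0,\lambda)=24\,\delta_{\ell,0}$, i.e.\ the first Fourier--Jacobi coefficient of $\Phi_{12,A_4^\vee(-5)}$ is $\Delta(\tau)$ with no theta factors. Your alternative ``internal'' route would also work but is unnecessary. The only cosmetic quibble is the sign convention in $(\rho,\rho)=2AC-(\vec B,\vec B)_L$ (the pairing should be taken in the positive definite $L$, not in $L(-1)$), which is immaterial here since $\vec B=0$.
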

\begin{proof} See a general  construction of Lorentzian Kac--Moody algebras
by $2$-reflective modular forms in  \cite{GN18}.
\end{proof} 

\begin{corollary}
The modular variety 
$\widetilde\Orth^+(2U\oplus A_4^\vee(-5))\setminus 
\cD(2U\oplus A_4^\vee(-5))$ is at least uniruled.
\end{corollary}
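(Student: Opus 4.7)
The plan is to deduce the statement from the general principle, due to Gritsenko and Nikulin \cite{GN18}, that an orthogonal modular variety of signature $(2,n)$ is uniruled whenever it carries a strongly $2$-reflective modular form whose associated $2$-reflective Weyl group on the hyperbolic section admits a Weyl vector of non-positive norm (parabolic or elliptic). Both hypotheses have already been verified in Section \ref{wight12}: Theorem \ref{Phi12} provides the strongly $2$-reflective form $\Phi_{12, A_4^\vee(-5)}$ with complete $(-2)$-divisor, and the preceding corollary shows that the associated Weyl vector on $U\oplus A_4^\vee(-5)$ has norm $0$.

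To apply the criterion, I would first pass to the tube-domain realization of $\cD(2U\oplus A_4^\vee(-5))$ at the rational $0$-dimensional cusp corresponding to the isotropic Weyl vector. In this model the arrangement of $(-2)$-reflective divisors $\cD_r$ becomes an \emph{affine} hyperplane arrangement in a $6$-dimensional affine space, and a fundamental Weyl chamber of the reflective Weyl group is a simplex. The closures of the $1$-dimensional faces of this simplex are rational curves on a toroidal compactification of the modular variety; their $\widetilde\Orth^+(2U\oplus A_4^\vee(-5))$-translates form a covering family, which gives the uniruling.

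The main conceptual point, and where the parabolic condition on the Weyl vector is used decisively, is that only in the parabolic (norm $0$) case does the Weyl chamber at the cusp have an affine simplicial structure that tiles the ambient affine space. In the strictly hyperbolic case one would only obtain a bounded polytope and hence no natural family of rational curves extending across the variety. The expected obstacle, beyond citing \cite{GN18}, is the verification that a single such rational curve actually deforms in a family that dominates an open subset of the modular variety rather than contracting inside a proper closed subvariety; this is handled by the standard argument that the isotropic orbits at the cusp are Zariski dense and the reflective walls through a generic cusp point span the tangent space to the variety.
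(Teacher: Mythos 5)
There is a genuine gap here, both in the criterion you invoke and in the geometric mechanism you sketch. The paper's proof is a direct appeal to the automorphic uniruledness criterion of Gritsenko--Hulek \cite{GH}: if a modular variety $\Gamma\backslash\cD(M)$ of dimension $n$ carries a nonzero modular form of weight $k>n$ (with a character) whose divisor is exactly the ramification divisor of $\cD(M)\to\Gamma\backslash\cD(M)$, then the variety is uniruled. This is applied to the form $\Phi_{12,A_4^\vee(-5)}$ of Theorem \ref{Phi12}, which has weight $12>6=n$ and complete $(-2)$-divisor, i.e.\ its divisor is precisely the ramification divisor for $\widetilde\Orth^+(2U\oplus A_4^\vee(-5))$. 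No such criterion appears in \cite{GN18}, which you cite as the source of your ``general principle'': that paper concerns $2$-reflective lattices and Lorentzian Kac--Moody algebras, and the parabolic Weyl-vector condition you highlight is the hypothesis used in the \emph{preceding} corollary (construction of the Lie algebra), not a hypothesis for uniruledness. What the uniruledness argument actually needs is the inequality $\text{weight}>\dim$ together with the reflective (ramification) divisor, and you never use or verify the weight condition.

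The geometric construction you propose in place of the criterion does not work. The Weyl chamber of the $2$-reflective Weyl group lives in the hyperbolic space attached to $U\oplus A_4^\vee(-5)$ (equivalently, in the real affine model at the cusp); its $1$-dimensional faces are real polyhedral edges, not algebraic curves in the $6$-dimensional complex variety $\widetilde\Orth^+(2U\oplus A_4^\vee(-5))\backslash\cD(2U\oplus A_4^\vee(-5))$, so they cannot be ``closed up'' to rational curves on a toroidal compactification in the way you describe. Moreover, even granting a single rational curve, its translates under the (countable) arithmetic group form only countably many curves, which cannot dominate a positive-dimensional variety; a uniruling requires an algebraic family, and nothing in your argument produces one. (The claim that the fundamental chamber ``is a simplex'' is also unjustified; parabolic type only guarantees finitely many walls.) The fix is simply to replace all of this by the citation of \cite{GH} applied to $\Phi_{12,A_4^\vee(-5)}$, checking $12>6$ and that the $(-2)$-divisor coincides with the ramification divisor of the quotient map.
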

\begin{proof}
One can use the automorphic criterion proved 
in \cite{GH}. We note that this modular variety can be considered as a 
moduli space of lattice polarized $K3$ surfaces.
\end{proof}

\subsection{Quasi pull-backs}
We proved the main theorem using a pull-back of the reflective modular form 
$\Phi_{2, A_4^\vee(5)}$. In this subsection we construct more pull-backs 
and quasi pull-backs of $\Phi_{2, A_4^\vee(5)}$ in order to obtain
interesting relations between liftings of multidimensional 
theta blocks. Using the same arguments about quasi pull-backs as in   
\cite{GHS07}--\cite{GN18}, we obtain the next proposition.

\begin{proposition}
Assume that $T=2U\oplus T_0(-1)\hookrightarrow 2U\oplus A_4^\vee(-5)$ is a 
primitive sublattice of signature $(2,n)$ with $n=3$, $4$, $5$. 
We consider the corresponding embedding 
of the homogenous domains 
$\cD(T)\hookrightarrow \cD(2U\oplus A_4^\vee(-5))$
and a finite set
\begin{equation*}
R_{\frac{2}{5}}(T_0^\perp)=
\left\{v\in \frac{1}{5}A_4(-1) : (v,v)=-
\frac{2}{5},\; (v,T_0)=0 \right\}.
\end{equation*}
Let $\cD(T)^\bullet$ be the affine cone of $\cD(T)$. Then the function 
$$ 
\Phi_{2,A_4^\vee(5)}\vert_T=\frac{\Phi_{2,A_4^\vee(5)}(Z)}
{\prod_{v\in R_{\frac{2}{5}}(T_0^\perp)/
{\pm 1}}(Z,v)}\Bigg\vert_{\cD(T)^\bullet}
$$
is a nontrivial modular form of weight 
$2+\frac{1}{2}|R_{\frac{2}{5}}(T_0^\perp)|$ with respect to 
$\widetilde{\Orth}^+(T)$ with a character of order $2$. 
The modular form $\Phi_{2,A_4^\vee(5)}\vert_T$ 
vanishes only on rational quadratic 
divisors of type $\cD_u(T)$, where $u$ is the orthogonal projection of one 
vector $v\in 2U\oplus \frac{1}{5}A_4(-1)$ with $(v,v)=-\frac{2}{5}$ to $T^
\vee$ satisfying $-\frac{2}{5}\leq (u,u)<0$. 
If the set $R_{\frac{2}{5}}(T_0^\perp)$ is non-empty then $F\vert_T$ is a 
cusp form.
\end{proposition}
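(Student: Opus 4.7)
I would apply the standard quasi pull-back technique of Gritsenko--Hulek--Sankaran (see \cite{GHS07}, \cite{GHS09}, \cite{GN18}). The first step is to analyse how the divisor \eqref{divA4} of $\Phi_{2,A_4^\vee(5)}$ restricts to the subdomain $\cD(T)$. For each primitive $v\in 2U\oplus \frac{1}{5}A_4(-1)$ with $(v,v)=-\tfrac{2}{5}$ I decompose $v=u+v_\perp$ with $u\in T^\vee$ the orthogonal projection to $T$ and $v_\perp\in T^\perp\otimes \QQ$. Since $T$ contains $2U$, its orthogonal complement in $2U\oplus A_4^\vee(-5)$ is negative definite, so $(v_\perp,v_\perp)\le 0$ and $(u,u)=-\tfrac{2}{5}-(v_\perp,v_\perp)\ge -\tfrac{2}{5}$. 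Either $u=0$ and $v\in R_{\frac{2}{5}}(T_0^\perp)$, in which case $\cD(T)\subset\cD_v$, or $u\ne 0$ and $\cD_v\cap\cD(T)=\cD_u(T)$ with $-\tfrac{2}{5}\le(u,u)<0$.

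Next I would verify that the quotient
\[
\Phi_{2,A_4^\vee(5)}(Z)\,\big/\prod_{v\in R_{\frac{2}{5}}(T_0^\perp)/\pm 1}(Z,v)
\]
extends to a nonzero holomorphic function in a neighbourhood of the generic point of $\cD(T)$ inside the ambient domain, so that its restriction to $\cD(T)^\bullet$ is a nontrivial modular form. By Theorem \ref{tha4} the divisor of $\Phi_{2,A_4^\vee(5)}$ is reduced (multiplicity one along every $\cD_v$), while each linear form $(Z,v)$ vanishes to order one along $\cD_v$; hence the excess zeros forced by case $u=0$ are cancelled exactly, leaving precisely the divisor $\sum\cD_u(T)$ from case $u\ne 0$. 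The weight is computed from $(tZ,v)=t(Z,v)$: dividing by the $|R_{\frac{2}{5}}(T_0^\perp)|/2$ linear forms raises the weight from $2$ to $2+\tfrac{1}{2}|R_{\frac{2}{5}}(T_0^\perp)|$.

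For the transformation law under $\widetilde{\Orth}^+(T)$ I would use Eichler's criterion (cf.\ Remark \ref{rem:divisor} and \cite[Proposition 3.3]{GHS09}) to extend each $g\in\widetilde{\Orth}^+(T)$ to some $\tilde g\in\Orth^+(2U\oplus A_4^\vee(-5))$ that acts as the identity on $T^\perp$. Such $\tilde g$ fixes $R_{\frac{2}{5}}(T_0^\perp)$ pointwise, so $\prod_v(Z,v)$ is $\tilde g$-invariant up to the sign produced by the choice of one representative from each $\pm 1$-pair. Combined with the character $\chi_2$ of $\Phi_{2,A_4^\vee(5)}$, this yields a character of order dividing $2$ for $\Phi_{2,A_4^\vee(5)}|_T$.

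The subtlest step will be the cusp form property when $R_{\frac{2}{5}}(T_0^\perp)\ne\emptyset$. Here I would argue as in \cite[\S 8]{GHS07}: at each $0$-dimensional cusp of $\cD(T)$ the Borcherds product expansion of Theorem \ref{th:Borcherds} forces $\Phi_{2,A_4^\vee(5)}$ to decay like $q^A\zeta^{\vec B}\xi^C$, whereas the linear forms $(Z,v)$ with $v\in T^\perp$ remain bounded in a neighbourhood of such a cusp and cannot cancel this exponential decay. Consequently the quasi pull-back vanishes at every $0$-dimensional cusp of $\cD(T)$, and Koecher's principle together with the boundary analysis forces $\Phi_{2,A_4^\vee(5)}|_T$ to be a cusp form.
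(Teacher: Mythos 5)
Your overall strategy is exactly the one the paper intends: the paper gives no written proof of this proposition and simply invokes ``the same arguments about quasi pull-backs as in \cite{GHS07}--\cite{GN18}'', and your decomposition $v=u+v_\perp$, the cancellation of the excess zeros along $\cD_v$ for $v\in R_{\frac{2}{5}}(T_0^\perp)$ against the order-one linear forms $(Z,v)$, the weight count via homogeneity on the affine cone, and the extension of isometries from $T$ to the ambient lattice (which is really Nikulin's extension criterion for isometries trivial on the discriminant, rather than Eichler's criterion, though the needed fact is standard) all reproduce that machinery correctly.

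The one step where your stated mechanism is genuinely wrong is the cusp form property. You argue that $\Phi_{2,A_4^\vee(5)}$ ``decays like $q^A\zeta^{\vec B}\xi^C$'' at the cusps while the linear forms $(Z,v)$ stay bounded, so the quotient still vanishes at the boundary. That reasoning, if valid, would show that $\Phi_{2,A_4^\vee(5)}$ itself is a cusp form --- but it is a form of \emph{singular} weight (see the remark after Theorem \ref{Phi12}'s section; its Fourier support lies on isotropic vectors), hence very far from cuspidal, and the same is true of its plain pull-backs when $R_{\frac{2}{5}}(T_0^\perp)=\emptyset$. The correct argument, which is the actual content of \cite[\S 8]{GHS07}, is Fourier-theoretic: at a $0$-dimensional cusp the Fourier coefficient of the quasi pull-back at $\ell_T\in T^\vee$ is $\sum_{\ell\,:\,\ell|_T=\ell_T} a(\ell)\prod_{v\in R_{\frac{2}{5}}(T_0^\perp)/\pm 1}(\ell,v)$ up to a constant. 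If $\ell_T$ is isotropic (a boundary coefficient), then writing $\ell=\ell_T+\ell_{T^\perp}$ and using that the support of $a$ lies in the closure of the positive cone together with the negative definiteness of $T_0^\perp$ forces $\ell_{T^\perp}=0$, whence $(\ell,v)=0$ for every $v\in T_0^\perp$ and the whole coefficient vanishes --- \emph{provided} the product is non-empty. This is exactly why the hypothesis $R_{\frac{2}{5}}(T_0^\perp)\neq\emptyset$ is needed, a dependence your boundedness argument does not see. With this step replaced, the proof is complete and coincides with the paper's intended route.
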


\begin{example}
Let $T_0=\ZZ \alpha_1+\ZZ \alpha_2+\ZZ \alpha_3$. Then $T_0$ is isomorphic 
to the lattice $A_3(5)$. In this case $T_0^\perp=\ZZ w_4$ and the set 
$R_{\frac{2}{5}}(T_0^\perp)$ is empty.  Let $\mathfrak{z}
_3=z_1\alpha_1+z_2\alpha_2+z_3\alpha_3=(2z_1-z_2)w_1+(2z_2-z_1-
z_3)w_2+(2z_3-z_2)w_3-z_3 w_4$. 

The pull-back on any sublattice of this type
commutes with the additive Jacobi lifting. In the coordinates fixed above we 
have
\begin{equation}\label{jfa3}
\begin{split}
\Theta_{A_4}|_{A_3(5)}(\tau,\mathfrak{z}_3)=&\eta^{-6}
\vartheta(2z_1-z_2)\vartheta(z_1+z_2-z_3)\vartheta(z_1+z_3)
\vartheta(z_1)\\
&\vartheta(2z_2-z_1-z_3)\vartheta(z_2+z_3-z_1)\vartheta(z_2-z_1)\\
&\vartheta(2z_3-z_2)\vartheta(z_3-z_2)\vartheta(z_3) \in J_{2,A_3,5}.
\end{split}
\end{equation}
Therefore, 
$\Grit(\Theta_{A_4}|_{A_3(5)})$ is a modular form of weight $2$ for 
$\widetilde{\Orth}^+(2U\oplus A_3(-5))$ and also a Borcherds product.
\end{example}

\begin{example}
The sublattice $\ZZ\alpha_1+\ZZ\alpha_3$ is isomorphic to $2A_1(5)$. By 
taking $z_2=0$ in (\ref{jfa3}), we get
\begin{equation*}
\Theta_{2A_1(5)}=\frac{\vartheta^2(z_1+z_3)
\vartheta^2(z_1-z_3)\vartheta^2(z_1)\vartheta^2(z_3)
\vartheta(2z_1)\vartheta(2z_3)}{\eta^6}\in J_{2,2A_1,5}.
\end{equation*}
Note that $\dim J_{2,2A_1,5}=1$ and $5$ is the smallest index such that 
there exists a nontrivial Jacobi form of weight $2$ for $2A_1$. Therefore, 
$\Grit(\Theta_{2A_1(5)})$ is a modular form of weight $2$ for 
$\widetilde{\Orth}^+(2U\oplus 2A_1(-5))$ and a Borcherds product.
\end{example}

We note that the Jacobi form $\Theta_{A_4}$ of type 
$10$-$\vartheta/6$-$\eta$ generates a tower of the lifts of quasi pull-backs 
of type $9$-$\vartheta/3$-$\eta$ of weight $3$ and $8$-$\vartheta$ 
of weight $4$.
The theta block conjecture for these theta blocks was proved in 
\cite{GPY1} based on the reflective modular forms constructed 
in \cite{G18}. The functions considered below have fewer parameters than
the modular forms in \cite{G18} but they are cusp forms.
 
\begin{example}
Let $T_0=\ZZ w_1+\ZZ w_2+\ZZ w_3$. Its Gram martix is 
\begin{align*}
&A_0=\left( \begin{array}{ccc}
4 & 3 & 2 \\ 
3 & 6 & 4 \\ 
2 & 4 & 6
\end{array} \right),&  &\det(A_0)=50,& &A_0^{-1}=\frac{1}{5} \left(\begin{array}{ccc}
2 & -1 & 0 \\ 
-1 & 2 & -1 \\ 
0 & -1 & 3/2
\end{array}  \right).
\end{align*}
In this case, we have $T_0^\perp=\ZZ\alpha_4$ and
$
R_{\frac{2}{5}}(T^\perp)=\{\pm\alpha_4/5\}. 
$
We write 
$\mathfrak{z}=
\mathfrak{z}_1+\mathfrak{z}_2$ with $\mathfrak{z}_1\in T_0\otimes\CC$, 
and $\mathfrak{z}_2\in T_0^{\perp}\otimes\CC$.
The quasi pull-back on $T_0\subset A_4^\vee(5)$
can be written in the affine coordinate as the derivative at 
$\mathfrak{z}_2=0$ (see \cite{GHS07} and \cite[\S 8.4]{GHS13}).
In this particular case we have the  differential operator with respect 
to $z_4$ which commutes with the Hecke operators in the Jacobi lifting.
As a result we  obtain the following Jacobi form of weight $3$
of type $9$-$\vartheta/3$-$\eta$
\begin{equation}
\Theta_{T_0}=\frac{\vartheta(z_1)\vartheta(z_2)\vartheta(z_1+z_2)\vartheta^2(z_3)\vartheta^2(z_2+z_3)\vartheta^2(z_1+z_2+z_3)}{\eta^3} \in J_{3,A_0,1}^{\text{cusp}}.
\end{equation}
Therefore, $\Grit(\Theta_{T_0})\in 
S_3(\widetilde{\Orth}^+(2U\oplus A_0(-1)),\chi_2)$ is a {\bf cusp} form of weight $3$ 
with the Borcherds automorphic product constructed by 
$\Psi_{A_4}(\tau,\mathfrak{z})|_{z_4=0}$.
\end{example}

\begin{example} We can continue the construction of quasi pull-back by
setting $z_2=0$. Then we get a Jacobi lifting of {\it canonical} weight with 
Borcherds product in four variables
$$
\Grit(\vartheta^2(z_1)\vartheta^4(z_3)\vartheta^2(z_1+z_3))
 \in S_{4}(\widetilde{\Orth}^+(2U\oplus B_0(-1)),\chi_2),
\quad 
B_0=\begin{pmatrix}4&2\\2&6\end{pmatrix}.
$$
\end{example}

The examples considered above support the following generalization of the 
theta block conjecture formulated in the introduction.

\begin{conjecture}\label{conj2}
Let $\phi\in J_{k,L,1}$. The function $\Grit(\phi)$ has a Borcherds 
product expansion, i.e.
$$
\Grit(\phi)=\Borch\left(-\frac{\phi\lvert T_{-}(2)}{\phi} \right), 
$$
if and only if $\phi$ is a pure theta block of type \eqref{FJtheta} with 
$f(0,\ell)\geq 0$ for all $\ell$ and $\phi$ has vanishing order one in $q$.
\end{conjecture}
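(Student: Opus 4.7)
The plan is to handle the two directions separately, since they differ markedly in depth.

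For the easier ``only if'' direction, suppose $\Grit(\phi)=\Borch(\Psi)$ with $\Psi=-\phi\lvert T_{-}(2)/\phi$. Comparing the first Fourier--Jacobi coefficient on both sides, the formula in Theorem \ref{th:Borcherds} identifies that coefficient on the Borcherds side with the theta product $\Theta_{f(0,*)}$ built from the $q^0$-part of $\Psi$, while on the Gritsenko side it is $\phi$ itself. Hence $\phi=\Theta_{f(0,*)}$ has the shape \eqref{FJtheta}, a theta block with integer exponents $f(0,\ell)$. The vanishing order one in $q$ is proved exactly as sketched after Theorem \ref{bpthm}, via \cite[Proposition 7.2]{PSY}: a higher $q$-order would force the constant term $f(0,0)$ of $\Psi$ to exceed $2k$, contradicting $f(0,0)=2k$. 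The purity claim $f(0,\ell)\geq 0$ is the subtle point, and I would extract it from the holomorphy of $\phi$ by a local analysis near each codimension-one locus $(\ell,\mathfrak{z})\in\ZZ\tau+\ZZ$: since $\phi$ literally equals the theta product $\Theta_{f(0,*)}$, any negative $f(0,\ell)$ would produce a genuine pole of $\phi$ along that divisor, contradicting $\phi\in J_{k,L,1}$.

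For the harder ``if'' direction, assume $\phi$ is a pure theta block of $q$-order one, and model the proof on Theorems \ref{tha4} and \ref{thmwt2}. The key step is to embed the situation into one where the $\norm_2$ condition applies. Concretely, I would seek a lattice $L'$ containing $L$ (often of higher rank, typically a root lattice) satisfying $\norm_2$, together with a distinguished Jacobi form $\phi'\in J_{k,L',1}$ --- typically a Kac--Weyl denominator of an affine Lie algebra attached to a root system matching the theta factors of $\phi$ --- whose pull-back under some specialisation $\mathfrak{z}\mapsto z\mathbf{v}$ recovers $\phi$. Lemma \ref{LemmaN2} then confines the singular Fourier coefficients of $\Psi'=-\phi'\lvert T_{-}(2)/\phi'$ to its $q^0$-part, so the divisor of $\Borch(\Psi')$ is entirely supported on the zero loci $(\ell,\mathfrak{z})\in\ZZ\tau+\ZZ$ of the theta factors in $\phi'$. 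The lemma preceding Theorem \ref{tha4} shows that $\Grit(\phi')$ vanishes with at least the same order on each of these divisors, so $\Grit(\phi')/\Borch(\Psi')$ is a holomorphic weight-zero modular form and hence a constant by K\"ocher's principle. Specialisation recovers the identity for $\phi$, because both $\Grit$ and $\Borch$ commute with the linear substitution on $\mathfrak{z}$, exactly as exploited in the proof of Theorem \ref{thmwt2}.

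The main obstacle is the construction of the enlarging lattice $L'$ and the ambient denominator-type Jacobi form $\phi'$. In the paper these are produced case by case from the root systems $A_1$, $A_2$, $3A_2$, $4A_1$, $A_4^\vee(5)$ and the Niemeier components, and each instance exploits a structural coincidence between $\phi$ and a Weyl--Kac denominator. A uniform proof of Conjecture \ref{conj2} would demand something close to a classification theorem: every pure theta block of $q$-order one arises as a pull-back from a Jacobi form of index one on some $\norm_2$ overlattice. Either establishing such a classification, or replacing the $\norm_2$ reduction by a direct argument controlling the singular Fourier coefficients of $\Psi=-\phi\lvert T_{-}(2)/\phi$ in the original lattice $L$, is where the essential difficulty of the conjecture lies and would absorb the bulk of the work.
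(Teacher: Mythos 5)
The statement you are proving is stated in the paper as a \emph{conjecture}, and the paper offers no proof of it; the only things actually established are the special cases (the theta blocks of \S 2 and the family $\phi_{2,\mathbf a}$ of Theorem \ref{thmwt2}) and, in the remark following the conjecture, a consequence of the ``only if'' part. So there is no paper proof to match, and your proposal must be judged on its own. For the ``if'' direction you candidly identify the missing ingredient --- a classification showing that every pure theta block of $q$-order one is a pull-back from an index-one Jacobi form on a $\norm_2$ overlattice --- and your outline of the $\norm_2$/K\"ocher mechanism is a faithful description of how the paper handles its special cases; but as you say, this leaves the conjecture open, so the proposal is a strategy, not a proof.

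The more serious problem is that your ``only if'' direction is also incomplete, and at one point wrong. The identification $\phi=\Theta_{f(0,*)}$ and the $q$-order-one statement do follow as in the discussion after Theorem \ref{bpthm} (via \cite[Proposition 7.2]{PSY}). But your purity argument --- that a negative exponent $f(0,\ell)$ would force a pole of $\phi$ along $(\ell,\mathfrak z)\in\ZZ\tau+\ZZ$ --- fails, because distinct theta factors can share zero loci: $\vartheta(\tau,a(\ell,\mathfrak z))$ for $a\geq 2$ vanishes on the full locus $(\ell,\mathfrak z)\in\ZZ\tau+\ZZ$ where $\vartheta(\tau,(\ell,\mathfrak z))$ vanishes, so a pole coming from $f(0,\ell)<0$ can be cancelled by positive exponents at proportional vectors (quotients such as $\vartheta(\tau,2z)/\vartheta(\tau,z)$ are holomorphic). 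This is precisely why the paper states, after Theorem \ref{bpthm}, ``We conjecture that $\phi$ should be a pure theta block\dots\ There is no example contrary to the claim at present'': purity is the genuinely open part of the ``only if'' direction, and your local analysis does not settle it.
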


\begin{remark}
The ``only if'' part of the above conjecture has an immediate corollary. 
If there exists a non-constant modular form of weight $k$ associated to $
\widetilde{\Orth}^+(2U\oplus L(-1))$ which is simultaneously an additive 
lift and a Borcherds product, then $\rank(L)\leq 8$ and $\rank(L)/2 \leq k 
\leq 12-\rank(L)$. In fact, since $\phi$ has vanishing order one in $q$, the number $A$ in Theorem \ref{th:Borcherds} is equal to $1$. Equation \eqref{FJtheta} defines a Jacobi form for  $L$.
The Fourier coefficients of any Jacobi form of weight $0$ define a generalized $2$-design in the dual lattice $L^\vee$ (see \cite[Proposition 2.6]{G18})
\begin{equation*}\label{eq-2des}
\sum_{\ell\in L^\vee} f(0,\ell)(\ell, \mathfrak{z})^2=
2C(\mathfrak{z},\mathfrak{z})
\qquad \forall\,\mathfrak{z}\in L\otimes \CC.
\end{equation*}
Therefore, the number of $\ell>0$ with non-zero $f(0,\ell)$ is at least $\rank(L)$. In view of the singular weight i.e. $k\geq \frac{1}{2}\rank(L)$ (see e.g. \cite[Corollary 3.2, 3.3]{Bo95}), we prove the above claim.
\end{remark}
\noindent
\textbf{Acknowledgements.} The first author is supported by the Laboratory of Mirror Symmetry NRU HSE (RF government grant, ag. N 14.641.31.0001) and IUF.
The second author is supported by the Labex CEMPI (ANR-11-LABX-0007-01) of the University of Lille. The authors thank the referees for various helpful comments.

\bibliographystyle{amsplain}

\end{document}